\newtheoremstyle{my theoremstyle}
{1.0em}                    
    {1.0em}                    
    {\itshape}                   
    {}                         
    {\scshape}                   
    {.}                          
    {.5em}                      
    {} 
\newtheoremstyle{dfn}
{1.0em}                    
    {1.0em}                   
    {}                 
    {}                        
    {\scshape}                 
    {.}                         
    {.5em}                       
    {}  
\theoremstyle{my theoremstyle}
   \newtheorem{thm}{Theorem}[section]
   \newtheorem{prop}[thm]{Proposition}
   \newtheorem{cor}[thm]{Corollary}
     \newtheorem{conj}[thm]{Conjecture}
\theoremstyle{dfn}
   \newtheorem{dfn}[thm]{Definition}
\theoremstyle{remark}   
   \newtheorem{exa}[thm]{{\scshape Example}}
\newcommand{\F}{\mathbb{F}}
\newcommand{\Z}{\mathbb{Z}}
\newcommand{\Q}{\mathbb{Q}}
\newcommand{\bQ}{\overline{\mathbb{Q}}}
\newcommand{\R}{\mathbb{R}}
\newcommand{\C}{\mathbb{C}}
\newcommand{\p}{\mathbb{P}} 
\newcommand{\ord}{\operatorname{ord}}
\newcommand{\jac}{\operatorname{Jac}}
\newcommand{\al}{\alpha}
\newcommand{\be}{\beta}
\newcommand{\D}{\mathscr{D}}
\newcommand{\la}{\lambda}
\newcommand{\lra}{\longrightarrow}
\newcommand{\ds}{\displaystyle}
\newcommand{\spec}{\operatorname{Spec}}
\newcommand{\Hom}{\operatorname{Hom}}
\numberwithin{equation}{section}
\date{\today}
\begin{document}

\title{Regulators on some abelian coverings of $\mathbb{P}^1$ minus $n+2$ points}
\author{Yusuke Nemoto}
\address{Graduate School of Science, Chiba
University, Yayoicho 1-33, Inage, Chiba, 263-8522, Japan.}
\email{y-nemoto@waseda.jp}

\author{Takuya Yamauchi}
\address{Takuya Yamauchi \\
Mathematical Inst. Tohoku Univ.\\
 6-3,Aoba, Aramaki, Aoba-Ku, Sendai 980-8578, JAPAN}
\email{takuya.yamauchi.c3@tohoku.ac.jp}

\date{\today}
\subjclass[2020]{11F27, 11F67, 33C20, 33C99}
\keywords{$L$-function, Beilinson regulators, hypergeometric functions}

\maketitle

\begin{abstract}
In this paper, we construct certain rational or integral elements in the motivic cohomology of superelliptic curves which are quotient curves of abelian coverings of $\mathbb{P}^1$ minus $n+2$ points, and prove that these elements are non-trivial by expressing their regulators in terms of Appell-Lauricella hypergeometric functions. We also check that such elements are integral under 
a mild assumption. 
We also give various numerical examples for the Beilinson conjecture on special values of $L$-functions 
of the superelliptic curves by using hypergeometric expressions. 
\end{abstract}

\tableofcontents

\section{Introduction} 
The Beilinson conjecture is a vast generalization of the class number formula, which asserts that the special values of the 
$L$-functions for algebraic varieties over number fields are explained by the regulator map (cf. \cite{Beilinson},\cite{RSS},\cite{Nekovar}).  
In \cite[Lecture 8,9]{Bloch},  Bloch has originally studied it for elliptic curves over $\Q$ with special emphasis on CM 
elliptic curves. Thus, the conjecture is sometimes called the Bloch-Beilinson conjecture. 

For a smooth projective geometrically irreducible algebraic curve $C$ of genus $g$ over a number field $k$, the Beilinson conjecture claims the regulator map  
\begin{equation}\label{rD}
r_{\D} \colon H^2_{\mathscr{M}}(C, \Q(2))_{\Z} \lra H^2_{\D} (C_{\R}, \R(2)) \simeq \Hom_\R (H_1(C(\C), \Z)^-, \R)
\end{equation}
leads to a $\Q$-structure of the target and a non-degenerate pairing $$\langle\ast,\ast\rangle:H^2_{\mathscr{M}}(C, \Q(2))_{\Z}\times 
 H_1(C(\C), \Z)^-\lra \R,\ (\alpha,\gamma)\mapsto r_{\D}(\alpha)(\gamma).$$
Here, the left-hand side of (\ref{rD}) is the integral part of 
the motivic cohomology $H^2_{\mathscr{M}}(C, \Q(2))$, and the right-hand side is the real Deligne cohomology (see Section \ref{Beilinson conjecture} for notation and definition). 
Further, it is conjectured that the discriminant of the pairing $\langle\ast,\ast\rangle$ for a suitable basis is 
proportional over $\Q^\times$ to $\ds L(C,2)\pi^{-2g[k:\Q]}$ where $L(C,s)$ stands for the $L$-function of $C$. 
If $L(C,s)$ has been analytically extended to the whole $\C$ as a holomorphic function in $s$, we can replace  $\ds L(C,2)\pi^{-2g[k:\Q]}$ 
with $L^{(g:[k:\Q])}(C,0)$ to compare it with the discriminant where $L^{(r)}(C,s)$ stands for the $r$-th derivative of $L(C,s)$ 
(see \cite[Conjecture 2.1]{DJZ}). 
After Bloch and Beilinson, many people have attacked this conjecture both theoretically (cf. \cite{Be86},\cite{Bru07}) and 
numerically (cf. \cite{DJZ}, \cite{LJ15}). 
In particular, there are many works about the regulators of algebraic curves (cf. \cite{Ross1, Ross2, Asakura, Otsubo, Tokiwa, Nemoto}). For example, let $F_N$ be the Fermat curve of degree $N$ over $\Q$ whose affine smooth model is defined by 
$$x^N + y^N=1.$$
Ross \cite{Ross1} proves  the Milnor $K_2$-symbol $\{1-xy, x\}^{6N}$ belongs to $H^2_{\mathscr{M}}(F_N, \Q(2))_{\Z}$ and proves that 
it is non-trivial by expressing the regulator in terms of an infinite sum of the Beta functions $B(s, t)$: 
there exists a $1$-cycle $\delta \in H_1(F_N(\C), \Z)$ such that 
$$r_{\D}(\{1-xy, x\}^{6N})(\delta)=-\dfrac{3}{2 \pi} \sum_{k=1}^{\infty} \frac1k \sin \frac{2 \pi k}N \left(1- \cos \frac{2 \pi k}{N} \right) B\left(\frac{k}N, \frac{k}N \right) \in \R^\times. $$
Otsubo \cite{Otsubo} generalizes Ross's result to Fermat motives by expressing the regulators in terms of the special value of hypergeometric functions ${}_3F_2(1)$. 
For the relationship  between regulators of algebraic curves and hypergeometric functions, see also \cite{Asakura, Tokiwa, Nemoto}. 

In this paper, we generalize the above result to the quotient curve $E_{N,n}$ (defined as below) of the generalized Fermat curve $X_{N, n}=X_{N, n}(\la_1, \ldots, \la_n)$ 
over $k_{X_{N,n}}:=\Q(\la_1, \ldots, \la_n)$ whose affine smooth model inside $\mathbb{A}^{n+1}$ with coordinates $(x,y_1,\ldots,y_n)$ is given by 
\begin{align*}
X_{N, n} : \left\{
\begin{array}{l}
x^N+(-1)^ny_1^N=\lambda_1 \\
x^N+(-1)^ny_2^N=\lambda_2 \\
\qquad \vdots \\
x^N+(-1)^ny_n^N=\lambda_n, 
\end{array}
\right.
\end{align*}
where $\lambda_1, \ldots, \lambda_n \in \bQ^\times$ are different from each other. 
We consider the Milnor $K_2$-symbol 
$$\xi_{X_{N, n}}:= \{1-xy_1 \cdots y_n, x\} \in K_2^M(k_{X_{N,n}}(X_{N, n})).$$
 
Let $k_{E_{N,n}}=\Q(\la_1,\ldots,\la_n)^{S_n}$ where the $n$-th symmetric group $S_n$ acts on $\Q(\la_1,\ldots,\la_n)$ by permuting $\la_1,\ldots,\la_n$.

Our target is the superelliptic curve $E_{N, n}=E_{N, n}(\la_1, \ldots, \la_n)$ defined over 
$k_{E_{N,n}}$ whose affine smooth 
model is given by 
$$Y^N=X(X-\lambda_1) \cdots (X-\lambda_n).$$
It is a quotient curve of $X_{N, n} $ endowed with the quotient morphism given in terms of affine coordinates:
$$p \colon X_{N, n} \to E_{N, n}; \quad (x, y_1, \ldots, y_n) \mapsto (X, Y)=(x^N, xy_1 \ldots y_n). $$
It is easy to see that the Milnor $K_2$-symbol $\{1-Y, X\}$ of $E_{N, n}$ satisfies 
$p^*(\{1-Y, X\})=N \xi_{X_{N, n}}$ and $p^*$ is injective by the norm argument (cf. \cite[p.226]{Ross2}). 

We first show the following result:
\begin{thm}\label{main1}{\rm(}Theorem \ref{mot}, Proposition \ref{intE}, Corollary \ref{intX}{\rm)} Keep the notation as above. Assume all roots of the polynomial 
$$\Phi_{X_{N,n}}(T):=T\Big(\ds\prod_{i=1}^n (T-\la_i)\Big)-1$$ are roots of unity. 
Then, $\xi_{X_{N, n}}$ {\rm(}resp. $\{1-Y, X\}${\rm)} is an element in the motivic rational cohomology $H^2_{\mathscr{M}} (X_{N, n}, \Q(2))\subset  K_2^M(k_{X_{N,n}}(X_{N, n}))$ (resp. $H^2_{\mathscr{M}} (E_{N, n}, \Q(2))\subset  K_2^M(k_{E_{N,n}}(E_{N, n}))$.  

Further, assume $\Phi_{X_{N,n}}(T)\in \Z[T]$. Then,  
$\xi_{X_{N, n}}$ and $\{1-Y, X\}$ are both integral, namely, they are elements of 
$H^2_{\mathscr{M}} (X_{N, n}, \Q(2))_\Z$ and $H^2_{\mathscr{M}} (E_{N, n}, \Q(2))_\Z
\subset K_2^M(\Q(E_{N, n}))$ 
respectively.   
\end{thm}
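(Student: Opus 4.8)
The plan is to reduce every assertion to a computation of tame symbols. For a smooth projective geometrically irreducible curve $C$ over a field, an element of $K_2^M(k(C))$ lies in $H^2_{\mathscr{M}}(C,\Q(2))$ if and only if, at every closed point $x\in C$, its tame symbol is torsion in $\kappa(x)^\times$; concretely
\[
H^2_{\mathscr{M}}(C,\Q(2)) = \ker\Big(K_2^M(k(C))\otimes\Q \xrightarrow{\ \partial\ } \bigoplus_{x} \kappa(x)^\times\otimes\Q\Big),\qquad \partial_x\{f,g\} = (-1)^{v_x(f)v_x(g)}\,\big(f^{v_x(g)}/g^{v_x(f)}\big)\big|_x .
\]
So for the first assertion I would run through the closed points of $E_{N,n}$ at which $X$ or $1-Y$ fails to be a unit. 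These lie above $X=0,\lambda_1,\dots,\lambda_n,\infty$ and above the roots of $\Phi_{X_{N,n}}$ (where $Y=1$). Above $X=0$ and $X=\lambda_i$ one has $1-Y=1$, so the residue is trivial; at the $d=\gcd(N,n+1)$ points at infinity a local computation (using $Y^N=f(X)$ to compare the leading coefficients of $X$ and $Y$) shows the residue is a sign times a $d$-th root of unity, hence torsion. The decisive contribution is at a point $P_\alpha$ over a root $\alpha$ of $\Phi_{X_{N,n}}$: there $v_{P_\alpha}(X)=0$, so $\partial_{P_\alpha}\{1-Y,X\}=\alpha^{-m}$ with $m=v_{P_\alpha}(1-Y)\ge 1$, and the hypothesis that $\alpha$ is a root of unity makes this a root of unity. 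Thus $\{1-Y,X\}\in H^2_{\mathscr{M}}(E_{N,n},\Q(2))$; since $p^*$ is injective and maps $H^2_{\mathscr{M}}(E_{N,n},\Q(2))$ into $H^2_{\mathscr{M}}(X_{N,n},\Q(2))$, with $p^*\{1-Y,X\}=N\,\xi_{X_{N,n}}$, the symbol $\xi_{X_{N,n}}$ lies in $H^2_{\mathscr{M}}(X_{N,n},\Q(2))$ as well.

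For integrality I would fix a regular proper model $\mathcal{E}\to\spec\mathcal{O}_{k_{E_{N,n}}}$ and use that $H^2_{\mathscr{M}}(E_{N,n},\Q(2))_\Z$ is cut out inside $H^2_{\mathscr{M}}(E_{N,n},\Q(2))$ by requiring, in addition, that the residue $\partial_V$ be torsion along every vertical prime divisor $V$, i.e. every irreducible component of a special fibre. The assumption $\Phi_{X_{N,n}}(T)\in\Z[T]$ gives $f(X)=X\prod_i(X-\lambda_i)\in\Z[X]$, so $Y^N=f(X)$ is an integral model and all $\lambda_i$ and all roots $\alpha$ of $\Phi_{X_{N,n}}$ are algebraic integers — indeed units, being roots of unity. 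At a prime of good reduction the special fibre is the smooth irreducible curve $Y^N=\bar f(X)$, along whose generic point $X$ and $1-Y$ are units, whence $v_V(X)=v_V(1-Y)=0$ and $\partial_V$ is trivial.

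The remaining, genuinely delicate, case is that of the finitely many bad primes, those dividing $N\cdot\operatorname{disc} f$, where already the equation $Y^N=f(X)$ is far from regular. The mechanism that should force the vertical residues to be torsion is the identity $1-Y^N=1-f(X)=-\prod_{j}(X-\alpha_j)$, whose right-hand side is a product of linear forms in the units $\alpha_j$; comparing it with $1-Y^N=\prod_{\zeta^N=1}(1-\zeta Y)$ yields $\sum_{\zeta^N=1}\{1-\zeta Y,X\}=\{-1,X\}+\sum_j\{X-\alpha_j,X\}$, each summand on the right being integral — as one checks directly — because $-1$ and the $\alpha_j$ are units. Since the superelliptic automorphism $Y\mapsto\zeta_N Y$ permutes the symbols $\{1-\zeta Y,X\}$ and preserves integrality, the problem reduces to tracking how the horizontal divisors over $X=0$, $X=\alpha_j$ and $X=\infty$ meet each vertical component $V$ and to checking that $f^{\,v_V(X)}/X^{\,v_V(1-Y)}$ reduces along $V$ to a power of some $\bar\alpha_j$, again a root of unity in $\kappa(V)^\times$. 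Integrality of $\xi_{X_{N,n}}$ then follows from that of $\{1-Y,X\}$ through $p^*$ as before. I expect the real obstacle to be exactly the primes dividing $N$: there one must exhibit an explicit regular model (or a sufficiently fine resolution), compute the valuations $v_V(X)$, $v_V(1-Y)$ and the leading terms of $X$ and $1-Y$ along every exceptional component, and verify case by case that the residue is a root of unity — and it is precisely here that the hypothesis $\Phi_{X_{N,n}}\in\Z[T]$ (integrality of the model and of the $\alpha_j$) has to be exploited with care.
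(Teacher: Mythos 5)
Your first assertion (rationality) is proved correctly, but by the mirror image of the paper's route: you compute the tame symbols of $\{1-Y,X\}$ directly on the superelliptic curve $E_{N,n}$ (at the points over $X=0$, over the roots of $\Phi_{X_{N,n}}$ where $Y=1$, and at infinity) and then ascend to $X_{N,n}$ via functoriality of $p^*$, whereas the paper computes the tame symbols of $\xi_{X_{N,n}}$ on the covering $X_{N,n}$ itself and then descends to $E_{N,n}$ using injectivity of $p^*\circ\pi^*$ (the norm argument). Both directions are sound: your ascent needs only that pullback commutes with tame symbols up to ramification exponents and torsion (injectivity of $p^*$, which you invoke, is not actually needed for this direction), while the paper's descent is what genuinely requires the norm trick. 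Your point-by-point residues, in particular $\alpha^{-m}$ at the points with $Y=1$ and the sign-times-$d$-th-root-of-unity residue at infinity, check out.

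The integrality part, however, has a genuine gap, and it sits exactly where you concede one. The identity $1-Y^N=-\prod_j(X-\alpha_j)=\prod_{\zeta^N=1}(1-\zeta Y)$ cannot carry the argument: since each $\alpha_j$ is a root of unity, $\{X-\alpha_j,X\}=\{-\alpha_j,X\}+\{1-X/\alpha_j,X\}$ is a Steinberg symbol plus torsion, so your identity only says that $\sum_{\zeta}\{1-\zeta Y,X\}$ is torsion in $K_2\otimes\Q$ — which gives no information whatsoever about the single symbol $\{1-Y,X\}$. Integrality of a sum does not descend to its summands: the $\mu_N$-action (which is defined only over $\Q(\mu_N)$, not over $k_{E_{N,n}}$) tells you the residues of the summands are conjugate to one another and sum to zero, not that each vanishes. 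Having noted this, you are left with ``exhibit an explicit regular model and verify case by case at $p\mid N$,'' which you do not carry out; so the decisive case is missing.

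The paper's proof shows that no explicit regular model or resolution is needed. Take any regular proper $\widetilde{\mathcal{C}}$ with a modification $\pi\colon\widetilde{\mathcal{C}}\to\mathcal{C}$, where $\mathcal{C}$ is the normalization of $\spec\Z[X,Y]/(Y^N-f(X))$ over $\mathbb{P}^1_{\Z}$ (this is where $\Phi_{X_{N,n}}\in\Z[T]$, i.e.\ $f$ monic integral, is used). A vertical prime divisor $\mathcal{D}$ of $\widetilde{\mathcal{C}}_p$ either dominates a component of $\mathcal{C}_p$ — and then $\ord_{\mathcal{D}}(1-Y)=\ord_{\mathcal{D}}(X)=0$, because $Y\equiv 1$ or $X\equiv 0$ on a one-dimensional component of $\{Y^N=\bar f(X)\}$ is impossible by a dimension count — or $\pi$ contracts $\mathcal{D}$ to a closed point $(x_0,y_0)$ of $\mathcal{C}_p$. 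In the contracted case the pullbacks of $X$ and $1-Y$ are \emph{constant} along $\mathcal{D}$ whenever they do not vanish at the point, and the case analysis of the singular point finishes: if $p\nmid N$ then $y_0=0$, so $1-Y\equiv 1$ on $\mathcal{D}$ and the residue is $1$; if $p\mid N$ and $x_0=0$ then again $y_0=0$; if $p\mid N$ and $x_0\neq 0$ then $\ord_{\mathcal{D}}(X)=0$ and $X\equiv x_0\in\overline{\F}_p^{\times}$, which is automatically a root of unity, so the residue $x_0^{-\ord_{\mathcal{D}}(1-Y)}$ is torsion. It is this observation — constancy along contracted divisors plus the fact that every nonzero constant in characteristic $p$ is a root of unity — that replaces the explicit resolution you were hoping for at $p\mid N$. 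Finally, the vertical components meeting the points at infinity (which your sketch never treats) are handled by the substitution $(X,Y)=(V^{-1},WV^{-k})$ and the splitting $\{1-WV^{-k},V^{-1}\}=\{WV^{-k},V^{-1}\}+\{V^kW^{-1}-1,V^{-1}\}$, each term of which is shown to have trivial residue. Your last step, deducing integrality of $\xi_{X_{N,n}}$ from that of $\{1-Y,X\}$ through $p^*$, agrees with the paper's Corollary.
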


For above $\Phi_{X_{N,n}}(T)$, put $f(T)=\Phi_{X_{N,n}}(T)+1$.
Suppose that 
\begin{equation}\label{thmassump}
\left\{\begin{array}{l}
\text{(i) all roots of 
the polynomial $\Phi_{X_{N,n}}$ are roots of unity,} \\
\text{(ii) All coefficients of $f(T)$ are real numbers,} \\
\text{(iii) $|f(t)|<1$ for $0\le t<1$ and $f(1)>0$,} \\
\text{(iv) there exists a unique $i$ ($1\le i \le n$) such that $0<\la_i<1$,} \\
\text{(v) $(-1)^{n-1}\la_1\cdots\la_n>0$}.
\end{array}\right.
\end{equation}
These conditions look technical but we can easily find infinitely many examples 
by using factors of $T^m-1$ where $m$ varies (cf. Example \ref{ex1}, \ref{ex2}). 
We define $$\gamma \colon [0,1]\lra E_{N,n}(\C),\ t\mapsto [t,\sqrt[N]{f(t)}]$$ where 
we take $\sqrt[N]{f(t)}$ to mean the principal branch of the $N$-th root of $f(t)\in \R$ for $t\in [0,1]$. 
It gives rise to an element 
$$\delta=\gamma-\overline{\gamma}\in H_1(E_{N,n}(\C),\Z)^{-}.$$
 We are now ready to explain our main result:
\begin{thm}[Theorem \ref{main:1}]  \label{main:intro}
Under the assumption (\ref{thmassump}), it holds that 
\begin{enumerate}[label=\textnormal{(\arabic*)}]
\item 
$r_{\D}(\{1-Y, X\})(\delta) =-\ds\frac{1}{\pi} \notag
\sum_{k=1}^{\infty}
\frac1k
 ((-1)^{n-1}\la_1 \cdots \la_n)^{\frac{k}N}\la_n^{\frac{k}{N}}
\sin\frac{\pi k}{N}B\left(\frac{k}{N}, \frac{k}{N} \right) $
$$\qquad  \qquad \quad \quad  \times F^{(n-1)}_D \left. \left( 
\frac{k}{N}, -\frac{k}{N}, \ldots, -\frac{k}{N}, \frac{2k}{N}+1
\ \right| 
\frac{\la_n}{\la_1}, \ldots, \frac{\la_{n}}{\la_{n-1}}
\right) 
$$
\item the right-hand side of the above expression is non-zero. 
\end{enumerate}
Here, $F^{(n)}_D(\ |\ x_1, \ldots, x_n)$ is an Appell-Lauricella hypergeometric function, which is a generalization of a Gaussian hypergeometric function (see Section \ref{Appell_HG}). 

In particular, the rational elements $\{1-Y,X\}\in H^2_{\mathscr{M}}(E_{N, n}, \Q(2))
\subset K_2^M(k_{E_{N,n}}(E_{N, n}))$ and 
$\{1-xy_1 \cdots y_n, x\} \in H^2_{\mathscr{M}}(X_{N, n}, \Q(2))\subset K_2^M(k_{X_{N,n}}(X_{N, n}))$ are both non-trivial. 
\end{thm}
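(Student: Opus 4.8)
The plan is to evaluate the regulator integral directly and then extract the hypergeometric expression from an Euler-type integral representation. Write $\eta(f,g)=\log|f|\,d\arg g-\log|g|\,d\arg f$ for the regulator $1$-form, so that, with the normalization fixed in Section~\ref{Beilinson conjecture}, $r_{\D}(\{f,g\})(\gamma)$ is a fixed positive multiple of $\int_\gamma\eta(f,g)$. After relabelling the $\la_i$ we may assume, by condition~(iv), that $\la_n$ is the unique root of $f(T)=T\prod_i(T-\la_i)$ in $(0,1)$; then along $\gamma$ the coordinate $X=t$ runs over $[0,1]$ and meets the branch point $X=\la_n$, where $Y=0$.

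First I would pass from $\delta=\gamma-\overline\gamma$ to $\gamma$. Since $E_{N,n}$ and $f(t)$ are real, complex conjugation carries the principal branch $\sqrt[N]{f(t)}$ to the conjugate one, so $\int_{\overline\gamma}\eta=-\int_\gamma\eta$ and $\int_\delta\eta=2\int_\gamma\eta$. On $\gamma$ the coordinate $X=t$ is real and positive, hence $d\arg X=0$ and the first term of $\eta(1-Y,X)$ vanishes; thus $r_{\D}(\{1-Y,X\})(\delta)$ is a positive multiple of $-\int_0^1\log t\,d\arg(1-Y)$.

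The next step is a branch analysis that simultaneously localizes the integral and settles part~(2). By conditions~(iv)--(v), $f<0$ on $(0,\la_n)$ and $f>0$ on $(\la_n,1)$: on the right interval $Y=\sqrt[N]{f(t)}$ is real and, by~(iii), lies in $[0,1)$, so $1-Y>0$, $\arg(1-Y)\equiv0$, and the integral localizes to $(0,\la_n)$. On $(0,\la_n)$ the principal branch is $Y=|f(t)|^{1/N}e^{i\pi/N}$ with $|Y|=|f(t)|^{1/N}<1$, so $\operatorname{Re}(1-Y)=1-|Y|\cos(\pi/N)>0$ and $\operatorname{Im}(1-Y)=-|Y|\sin(\pi/N)<0$; hence $1-Y$ lies in the open fourth quadrant and $\arg(1-Y)\in(-\pi/2,0)$ is strictly negative on all of $(0,\la_n)$. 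Integrating by parts (the boundary terms vanish since $\arg(1-Y)\to0$ at both ends, like $t^{1/N}$ as $t\to0$) rewrites the integral as $\int_0^{\la_n}\frac{\arg(1-Y)}{t}\,dt$, whose integrand is strictly negative. This already proves part~(2): the regulator is non-zero, indeed strictly negative, \emph{without} any analysis of the sign-changing series.

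For part~(1) I would substitute the uniformly convergent expansion $\arg(1-Y)=\operatorname{Im}\log(1-Y)=-\sum_{k\ge1}\frac{1}{k}|f(t)|^{k/N}\sin\frac{\pi k}{N}$ (legitimate since $|Y|$ is bounded away from $1$ on $[0,\la_n]$) and integrate term by term, which is justified on $(\varepsilon,\la_n)$ by uniform convergence and then on $(0,\la_n)$ by the integrable majorant $t^{1/N-1}$. The $k$-th term contributes $\int_0^{\la_n}\frac{|f(t)|^{k/N}}{t}\,dt$; writing $|f(t)|=-t\prod_i(t-\la_i)$ and substituting $t=\la_n s$ brings this to $((-1)^{n-1}\la_1\cdots\la_n)^{k/N}\la_n^{k/N}\int_0^1 s^{k/N-1}(1-s)^{k/N}\prod_{i<n}\bigl(1-\tfrac{\la_n}{\la_i}s\bigr)^{k/N}ds$, where the prefactor is positive by~(v). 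The remaining integral is precisely the Euler representation of $F^{(n-1)}_D$ with $a=k/N$, $b_i=-k/N$, $c=2k/N+1$ and arguments $\la_n/\la_i$, and since $\Gamma(k/N)\Gamma(k/N+1)/\Gamma(2k/N+1)=\tfrac12 B(k/N,k/N)$, reassembling the constants (including the factor $2$ from $\delta=\gamma-\overline\gamma$) yields the formula in~(1). The main obstacle is exactly this identification: one must keep the principal branch of $Y$ straight, choose the substitution that produces simultaneously the Beta factor and the precise Lauricella parameters and arguments in the statement, and justify the term-by-term integration across the singular endpoint $t=0$.
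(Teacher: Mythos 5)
Your proposal is essentially correct, and for part (1) it follows the same route as the paper's own proof: the paper likewise localizes to $(0,\lambda_n)$ (in its version, the $[\lambda_n,1]$ contributions of $\gamma$ and $\overline{\gamma}$ cancel because the integrand is real there), expands $\log(1-Y)$ using assumption (iii) of (\ref{thmassump}), substitutes $t=\lambda_n s$, and identifies the resulting integral with $B(\frac{k}{N},\frac{k}{N}+1)F_D^{(n-1)}=\frac12 B(\frac{k}{N},\frac{k}{N})F_D^{(n-1)}$ via the Euler representation \eqref{int}. The only packaging difference is that you integrate the single-valued form $\eta(f,g)=\log|f|\,d\arg g-\log|g|\,d\arg f$ and use $c^*\eta=-\eta$ to reduce $\delta$ to $2\gamma$, whereas the paper applies Proposition \ref{regformula} (the $\operatorname{Im}\int\log f\,d\log g$ formula with base-point correction) to a perturbed cycle $\delta_\epsilon$ and computes $I_1-I_2$; these are equivalent, since the two integrands differ by an exact term which the base-point term compensates. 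Where you genuinely depart from the paper is part (2): your pointwise-sign argument ($\operatorname{Im}(1-Y)=-|f(t)|^{1/N}\sin\frac{\pi}{N}<0$ on $(0,\lambda_n)$, hence $\arg(1-Y)<0$ there, hence the integral is strictly negative) replaces the paper's rearrangement of the series, which groups terms $\beta_K$ by the class of $k$ modulo $2N$, proves $\beta_1>\cdots>\beta_{2N}>0$ from (iii), and pairs $K$ with $N+K$ to get $-\frac1\pi\sum_{K=1}^{N-1}\sin\frac{\pi K}{N}(\beta_K-\beta_{N+K})<0$. Your argument reaches the same conclusion (strict negativity of the regulator) without handling a conditionally signed series, and is simpler and more robust; the paper's argument has the merit of reading the sign off the final hypergeometric expansion itself.

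Two caveats. First, the constant: carried to the end, your computation gives $r_{\D}(\{1-Y,X\})(\delta)=\frac1\pi\int_\gamma\eta$, and after the Euler identification the prefactor is $-\frac{1}{2\pi}$ in front of $\sum_k\frac1k(\cdots)B(\frac{k}{N},\frac{k}{N})F_D^{(n-1)}$. This agrees with Theorem \ref{main:1} as stated in Section \ref{computation}, but \emph{not} with the statement quoted above (Theorem \ref{main:intro}), which has $-\frac1\pi$; the two statements in the paper differ by a factor of $2$, and your method, like the paper's own proof, lands on the $-\frac{1}{2\pi}$ normalization, so you should not claim to recover the $-\frac1\pi$ constant verbatim. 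Second, the cycle $\delta$ passes through a zero of $X$ at $t=0$ and (when $f(1)=1$, as in Examples \ref{ex1} and \ref{ex2}) a zero of $1-Y$ at $t=1$, so the regulator formula does not literally apply to $\delta$; the paper inserts the perturbed cycle $\delta_\epsilon$ and passes to the limit. Your endpoint estimates ($\arg(1-Y)=O(t^{1/N})$ near $0$, and $\arg(1-Y)\equiv 0$ near $1$) are exactly what makes that limit work, but the limiting step itself should be stated rather than left implicit.
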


As an application of Theorem \ref{main:intro}, in Section \ref{numerical_Beilinson}, we numerically test the Beilinson conjecture 
for certain $E_{N,n}$ when $E_{N,n}$ has small genus.  

Our construction of motivic elements is a kind of covering constructions so that a covering $X\lra Y$ with a well-understood 
$X$ for motivic elements yields motivic elements for $Y$ from $X$. For example, the case when $Y=E$ is an elliptic curve over $\Q$ with 
a modular parametrization $X=X_0(N)\lra E$ is nothing but the case Beilinson \cite{Be86} handled. 
A key of our construction is to use the covering $X_{N,n}$ of $E_{N,n}$ to figure out easily verifiable conditions on $(\la_1,\cdots,\la_n)$ 
to find a motivic element. It may be hard to directly work on the general superelliptic curves.

This paper is organized as follows. In Section \ref{Preparation}, we recall some properties of the Appell-Lauricella hypergeometric functions and the statement of the Beilinson conjecture for curves. 
In Section \ref{elements}, we construct an element in the motivic cohomology of the generalized Fermat curve under some assumptions. We also prove that this element defines an element of the integral part of the motivic cohomology (Proposition \ref{intE}).  
In Section \ref{computation}, we compute the regulator of the element constructed in Section \ref{elements}, and express it in terms of Appell-Lauricella hypergeometric functions. 
In Section \ref{numerical_Beilinson}, we numerically test the Beilinson conjecture over $\Q$ and $\Q(\sqrt{-3})$ when 
$E_{N,n}$ has small genus and an extra automorphism.

\subsection*{Acknowledgments} 
This work grew out of Yuta Tokiwa's master's thesis on the covering construction for $n=2$ which was suggested by 
the second author. We would like to thank him for allowing us to extend his work. 
We are also grateful to Professor Noriyuki Otsubo for many helpful discussions. 
We thank Chiba University and Tohoku University for their incredible hospitality during the course of this research. 
The first author was supported by
Waseda University Grant for Early Career Researchers (Project number: 2025E-041).

\subsection*{Notations} 
For a positive integer $N$, $\mu_N \subset \overline{\Q}^{\times}$ denotes the group of 
$N$-th roots of unity. 
We put $\zeta_N=\exp(2 \pi i/N)$.

\section{Preparation} \label{Preparation}

\subsection{Appell-Lauricella hypergeometric functions} \label{Appell_HG}
In this subsection, we recall Appell-Lauricella hypergeometric functions. We refer to \cite{Slater} for basics.  
\begin{dfn}[cf. \cite{Slater}]
Appell-Lauricella's function $F_D^{(n)}$ is defined by
\begin{align*}
&F^{(n)}_D \left. \left( 
a, b_1, \ldots, b_n, c
\ \right| 
x_1, \ldots, x_n
\right) 
=
\sum_{m_1=0}^{\infty} \cdots \sum_{m_n=0}^{\infty}
\frac{(a)_{m_1+\cdots + m_n}(b_1)_{m_1} \cdots (b_n)_{m_n}}{(c)_{m_1+\cdots + m_n}(1)_{m_1} \cdots (1)_{m_n}}x_1^{m_1}\cdots x_n^{m_n},
\end{align*}
where $a, b_i, c \in \C$ with $c \not \in \Z_{\leq 0}$ and $(a)_n=\Gamma(a+n)/\Gamma(a)$ is the Pochhammer symbol. 
\end{dfn}

When $n=1$, $F_D^{(1)}(x)$ corresponds to ${}_2F_1(x)$,  hence Appell-Lauricella functions are a generalization of Gaussian  hypergeometric functions. 
\begin{prop}[{\cite[Theorem 3.4.1]{KS}}]
If $0 < \operatorname{Re} (a) < \operatorname{Re} (c) $, then this function has the following integral representation:
\begin{align} \label{int}
B(a, c-a)
&F^{(n)}_D \left. \left( 
a, b_1, \ldots, b_n, c
\ \right| 
x_1, \ldots, x_n
\right) 
 \\
& = \notag
\int_0^1 \left(\prod_{i=1}^n (1-x_i u)^{-b_i}\right)
u^{a-1}
(1-u)^{c-a-1}du, 
\end{align}
where $B(s, t)$ is the Beta function.  
\end{prop}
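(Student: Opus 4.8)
The plan is to realize the Beilinson regulator of a Milnor $K_2$-symbol on a curve as the integral of the Beilinson--Deligne regulator $1$-form and then to evaluate that integral along the explicit cycle $\delta$. Concretely, for the symbol $\{1-Y,X\}$ I will use
$$\eta(1-Y,X)=\log|1-Y|\,d\arg X-\log|X|\,d\arg(1-Y),$$
so that $r_\D(\{1-Y,X\})(\delta)=\frac{1}{2\pi}\int_\delta\eta(1-Y,X)$ with the normalization recalled in Section \ref{Beilinson conjecture}. The first reduction is to exploit that $\delta=\gamma-\overline{\gamma}$ lies in the $(-1)$-eigenspace and that $\eta$ is anti-invariant under complex conjugation, which gives $\int_\delta\eta=2\int_\gamma\eta$; since $X=t$ is real and positive along $\gamma$, the term $\log|1-Y|\,d\arg X$ drops out and $\int_\gamma\eta=-\int_\gamma\log t\,d\arg(1-Y)$.

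Next I would integrate by parts. Writing $Y=\sqrt[N]{f(t)}$ and checking that the boundary contributions vanish (at $t=1$ because $\log t=0$, and at $t=0$ because $\arg(1-Y)=O(t^{1/N})$ while $\log t$ grows only logarithmically), I obtain $\int_\gamma\log t\,d\arg(1-Y)=-\int_0^1\arg(1-Y)\,\frac{dt}{t}$. Now I expand $\arg(1-Y)=\operatorname{Im}\log(1-Y)=-\sum_{k\ge1}\frac{\operatorname{Im}(Y^k)}{k}$; the expansion converges uniformly on the relevant interval because condition (iii) forces $|Y|=|f(t)|^{1/N}\le\rho<1$ there, which justifies interchanging sum and integral. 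The key local computation is that $f(t)\in\R$, so by the choice of principal branch $\operatorname{Im}(Y^k)=\sin\frac{\pi k}{N}\,|f(t)|^{k/N}$ exactly on the locus where $f(t)<0$ and $0$ elsewhere. Using conditions (iv) and (v) — condition (v) fixing $f(t)<0$ near $t=0^+$ and (iv) guaranteeing a single sign change at the unique root $\la_n\in(0,1)$ — this locus is precisely $(0,\la_n)$.

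It then remains to identify $\int_0^{\la_n}|f(t)|^{k/N}\frac{dt}{t}$ with a Beta times Lauricella value. After the substitution $t=\la_n s$ and the factorizations $|t-\la_i|^{k/N}=|\la_i|^{k/N}|1-\tfrac{\la_n}{\la_i}s|^{k/N}$ for $i\ne n$ and $|t-\la_n|^{k/N}=\la_n^{k/N}(1-s)^{k/N}$, the integral becomes $\int_0^1 s^{k/N-1}(1-s)^{k/N}\prod_{i\ne n}(1-\tfrac{\la_n}{\la_i}s)^{k/N}\,ds$, which is exactly the Euler integral (\ref{int}) with $a=\frac kN$, $c=\frac{2k}N+1$ and $b_i=-\frac kN$, hence equals $B(\frac kN,\frac kN)\,F_D^{(n-1)}(\frac kN,-\frac kN,\ldots,-\frac kN,\frac{2k}N+1\mid\frac{\la_n}{\la_1},\ldots,\frac{\la_n}{\la_{n-1}})$. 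Collecting the powers of $\la_n$ and the factors $|\la_i|^{k/N}$ and converting them into $((-1)^{n-1}\la_1\cdots\la_n)^{k/N}\la_n^{k/N}$ via conditions (ii) and (v) then yields formula (1). I expect this branch-and-sign bookkeeping — reconciling the principal $N$-th root of the real number $f(t)$ with the product of principal branches of the individual factors $(t-\la_i)^{k/N}$, in particular for the complex-conjugate pairs among the $\la_i$, and pinning down the argument of each $1-\frac{\la_n}{\la_i}s$ so that the absolute values may be dropped — to be the main obstacle.

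For part (2) I would avoid estimating the series and argue directly from $\int_\gamma\log t\,d\arg(1-Y)=-\int_0^1\arg(1-Y)\frac{dt}{t}$. On $(0,\la_n)$ we have $Y=|f(t)|^{1/N}e^{i\pi/N}$ with $|Y|<1$, so $\operatorname{Re}(1-Y)\ge 1-|Y|>0$ and $\operatorname{Im}(1-Y)=-|f(t)|^{1/N}\sin\frac\pi N<0$; hence $1-Y$ lies in the open fourth quadrant and $\arg(1-Y)<0$ strictly, while $\arg(1-Y)=0$ on $(\la_n,1)$ where $0<f<1$. Therefore $-\arg(1-Y)\frac1t$ is non-negative and strictly positive on a set of positive measure, so the integral is a strictly positive real number; combined with the nonzero prefactor this shows the regulator is a non-zero real number, and by the injectivity of $p^*$ recalled before Theorem \ref{main1} both symbols are non-trivial.
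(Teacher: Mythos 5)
Your writeup does not prove the statement it was assigned. The statement is Proposition 2.2, the Euler-type integral representation
$B(a,c-a)\,F^{(n)}_D(a,b_1,\ldots,b_n,c\mid x_1,\ldots,x_n)=\int_0^1\bigl(\prod_{i=1}^n(1-x_iu)^{-b_i}\bigr)u^{a-1}(1-u)^{c-a-1}\,du$,
which the paper itself does not prove but imports from \cite[Theorem 3.4.1]{KS}. What you have written instead is a proof sketch of Theorem \ref{main:1}, the regulator computation — and at the decisive step you say the integral ``is exactly the Euler integral (\ref{int})\ldots hence equals $B(\frac kN,\frac kN)F^{(n-1)}_D(\cdots)$''. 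That is, you \emph{invoke} the proposition as a black box at precisely the point where it was supposed to be established. Relative to the assigned statement, nothing in your argument addresses it; the attempt is vacuous (indeed circular, since the identity is assumed rather than derived). As an aside, your sketch of the regulator theorem does track the paper's actual proof of Theorem \ref{main:1} fairly closely (expansion of $\log(1-Y)$, reduction of the support to $(0,\lambda_n)$, substitution $t=\lambda_n s$, and a positivity argument for non-vanishing — the paper groups the series into the partial sums $\beta_K$ rather than arguing pointwise on $\arg(1-Y)$ as you do, and works from Proposition \ref{regformula} rather than your $\eta$-form, but these are cosmetic differences). None of that, however, is the statement under review.

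A correct proof of the proposition is short and you should supply it: for $|x_i|<1$ expand each factor by the binomial series, $\prod_{i=1}^n(1-x_iu)^{-b_i}=\sum_{m_1,\ldots,m_n\ge 0}\prod_{i=1}^n\frac{(b_i)_{m_i}}{m_i!}\,x_i^{m_i}\,u^{m_1+\cdots+m_n}$, interchange summation and integration (justified by dominated convergence, using that $0<\operatorname{Re}(a)<\operatorname{Re}(c)$ makes $u^{\operatorname{Re}(a)-1}(1-u)^{\operatorname{Re}(c-a)-1}$ integrable on $[0,1]$ and the series converges uniformly on compacta in $u$), and evaluate the resulting Beta integrals: with $|m|=m_1+\cdots+m_n$,
\begin{equation*}
\int_0^1 u^{a+|m|-1}(1-u)^{c-a-1}\,du=B(a+|m|,c-a)=B(a,c-a)\,\frac{(a)_{|m|}}{(c)_{|m|}},
\end{equation*}
the last equality following from $\Gamma(z+n)=(z)_n\Gamma(z)$. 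Since $(1)_{m_i}=m_i!$, resumming gives exactly $B(a,c-a)\,F^{(n)}_D(a,b_1,\ldots,b_n,c\mid x_1,\ldots,x_n)$; for $x_i$ outside the unit polydisc (with $1-x_iu$ avoiding the cut) both sides are analytic in the $x_i$, so the identity extends by analytic continuation. Note also that in the application you sketch, the hypothesis is satisfied with $a=\frac kN$, $c=\frac{2k}N+1$, so $\operatorname{Re}(c-a)=\frac kN+1>0$ — this is where the condition $0<\operatorname{Re}(a)<\operatorname{Re}(c)$ is actually used, and your proposal never checks it because it never engages with the proposition at all.
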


\subsection{Beilinson conjecture} \label{Beilinson conjecture}
We recall the Beilinson conjecture for curves (cf. \cite{DJZ, Nekovar}).  
Let $k$ be a number field and $\mathscr{O}_k$ be the ring of integers of $k$. 
Let $C$ be a smooth projective geometrically irreducible curve of genus $g$ over $k$ and 
we denote by $k(C)$ the function field of $C$ over $k$.  We note that   
$C(\C)=\ds\coprod_{\sigma:k\hookrightarrow \C}C_\sigma(\C)$ 
where $C_\sigma=C\times_{k,\sigma}\C$. 
Let 
$$r_{\D} \colon H^2_{\mathscr{M}}(C, \Q(2)) \to H^2_{\D} (C_{\R}, \Q(2))$$
 be the regulator map defined by Beilinson \cite{Beilinson}. 
The left-hand side is the motivic cohomology group defined via $K$-theory.  
 The right-hand side is the Deligne cohomology group, for which we have an isomorphism (cf. \cite{Nekovar}) 
 $$H^2_{\D} (C_{\R}, \Q(2)) \simeq \operatorname{Hom}(H_1(C(\C), \Z)^-, \R). $$
Here, $-$ denotes the $(-1)$-eigenspace with respect to the \textit{de Rham conjugation} $F_{\infty} \otimes c_{\infty}$, where the \textit{infinite Frobenius} $F_{\infty}$ is the complex conjugation acting on $C(\C)$ and $c_{\infty}$ is the complex conjugation on the coefficients. 

We have an isomorphism
$$H^2_{\mathscr{M}}(C, \Q(2)) \cong \operatorname{Ker}\left(\tau \otimes \Q \colon K_2^M(k(C)) \otimes \Q \to \bigoplus_{x \in C^{(1)}}\kappa(x)^* \otimes \Q \right),$$
where $C^{(1)}$ is the set of all closed points on $C$, $\kappa(x)$ is the residue field, and $\tau=(\tau_x)$ is the tame symbol on $K_2^M(k(C))$ defined by
$$\tau_x\left( \{f, g\} \right)=(-1)^{\operatorname{ord}_x f \operatorname{ord}_x g}\left( \frac{f^{\operatorname{ord}_xg}}{g^{\operatorname{ord}_xf}}\right)(x). $$
The integral part of $H^2_{\mathscr{M}}(C, \Q(2)) $, denoted by $H^2_{\mathscr{M}}(C, \Q(2))_\Z$,  is defined by using 
any regular proper model of $C$ over $\mathscr{O}_k$ (see \cite[(3.5)]{DJZ} for the definition). 
The integral part is independent of the choice of such a model \cite[Section 3]{DJZ}.

For any embedding $\sigma \colon k \hookrightarrow \C$, the 
projection $C\times_{k,\sigma}k\lra C$ induces an isomorphism $k(C)\lra \sigma(k)(C)$.   
We write $f^{\sigma}\in \sigma(k)(C)$ as the image of $f$ under this isomorphism.   
The following proposition is a standard tool for explicit computation of the regulator:
\begin{prop}\label{regformula}{\rm(}\cite{Beilinson}, \cite[p.222, Proposition 4.4.4]{Ram}{\rm)}
Let $C$ be as above and $\{f, g\} \in H^2_{\mathscr{M}}(C, \Q(2))$. 
Fix an embedding $\sigma \colon k \hookrightarrow \C$.  
 Then, for any $\gamma \in H_1(C(\C), \Z)^-$, it holds   
\begin{align} \label{reg}
r_{\D}(\{f, g\})(\gamma)=\frac{1}{2 \pi} \operatorname{Im}\left( \int_{\widetilde{\gamma}} \log f^{\sigma} d \log g^{\sigma}- \log |g^{\sigma}(P_0)| \int_{\widetilde{\gamma}} d \log f^{\sigma} \right).
 \end{align}
Here, $\widetilde{\gamma}$ is a cycle representing $\gamma$ which aboids the zeros and poles of $f^{\sigma}$ and $g^{\sigma}$, and 
$P_0 \in C(\C)$ is a base point such that the path $\widetilde{\gamma}$ starts from $P_0$ to define 
the argument of $\log$. 
 \end{prop}

\begin{conj}[\cite{Beilinson}, \cite{DJZ, Nekovar}]Keep the notation as above. Then 
\begin{enumerate}[label=\textnormal{(\arabic*)}]
\item $H^2_{\mathscr{M}}(C, \Q(2))_{\Z}$ is a free abelian group of rank $g\cdot [k : \Q]$ and the pairing \eqref{reg} is non-degenerate; 
\item let $\{\xi_1, \ldots, \xi_{g\cdot [k:\Q]}\}$ and $\{\gamma_1, \ldots, \gamma_{g \cdot [k:\Q]}\}$ be bases of $H^2_{\mathscr{M}}(C, \Q(2))_{\Z}$ and  $H_1(C(\C), \Z)^-$ respectively, then 
$$\operatorname{det}(r_{\D}(\xi_i)(\gamma_j))_{i, j} \equiv L^{(g \cdot [k: \Q])}(C, 0)\equiv 
\frac{w N}{(2\pi)^{2g\cdot[k:\Q]}}L(C,2) \quad \pmod{\Q^*},$$
where $L^{(r)}(C, s)$ is the $r$-th derivative of $L(C, s)$, $w$ is the root number of $L(C,s)$, 
and $N$ is the norm {\rm(}to $\Q)$ of the conductor of $C$. 
\end{enumerate}
\end{conj}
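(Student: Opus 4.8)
The plan is to attack the two assertions of the conjecture separately, while being candid that this is the full Beilinson conjecture for curves and that a complete proof in the stated generality is beyond current technology; what I can realistically propose is a strategy that becomes effective for the specific superelliptic curves $E_{N,n}$ studied here, where the explicit regulator computation of Theorem \ref{main:intro} and the rich automorphism group furnish the missing inputs. I would therefore aim to prove the non-degeneracy on an explicitly constructed subspace unconditionally, and organize the remaining (genuinely open) parts as a program that is completed numerically in Section \ref{numerical_Beilinson}.

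For part (1), I would treat the lower and upper bounds on the rank of $H^2_{\mathscr{M}}(C,\Q(2))_{\Z}$ differently. For the lower bound I would start from the single class $\{1-Y,X\}\in H^2_{\mathscr{M}}(E_{N,n},\Q(2))_{\Z}$ produced by Theorem \ref{main1}, and generate a family of $g\cdot[k:\Q]$ candidate elements by letting the cyclic automorphism $(X,Y)\mapsto(X,\zeta_N Y)$ and the embeddings $\sigma\colon k\hookrightarrow\C$ act on it. Pairing this family against a correspondingly large collection of cycles --- the $\mu_N$-translates of the cycle $\delta$ used in Theorem \ref{main:intro} --- yields a regulator matrix; showing it is nonsingular, using the hypergeometric non-vanishing of Theorem \ref{main:intro}(2) together with the linear independence of the characters of $\mu_N$, would give both the rank lower bound and the non-degeneracy of the pairing \eqref{reg} on the span of these classes. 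The upper bound is the genuine difficulty: one must show $H^2_{\mathscr{M}}(C,\Q(2))_{\Z}$ is finitely generated of rank exactly $g\cdot[k:\Q]$, which rests on finiteness properties of motivic cohomology that are themselves open for curves in general.

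For part (2), the determinant/$L$-value comparison, I would exploit the CM structure of $E_{N,n}$. Because $E_{N,n}$ carries the $\mu_N$-action, its $H^1$ decomposes into eigenspaces on which $\mu_N$ acts through a fixed character, and $L(E_{N,n},s)$ factors as a product of $L$-functions of the corresponding Jacobi-sum (Hecke) characters, whose values at $s=2$ are accessible through analytic continuation and the functional equation. In parallel, the regulator determinant $\det\bigl(r_{\D}(\xi_i)(\gamma_j)\bigr)$ factors along the same eigenspaces via Proposition \ref{regformula}, so that each factor is the single hypergeometric regulator of Theorem \ref{main:intro}(1). The plan is then to match, eigenspace by eigenspace, the Appell--Lauricella value against the corresponding Hecke $L$-value modulo $\Q^\times$, using a $\Gamma$-function/period evaluation of the series of the kind that relates Beta values to CM periods.

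The hard part will be this final matching. Even granting the eigenspace decomposition, turning the identity $r_{\D}(\{1-Y,X\})(\delta)=\text{(hypergeometric series)}$ into a clean proportionality with $L(E_{N,n},2)\,\pi^{-2g[k:\Q]}$ requires a closed-form evaluation of the Appell--Lauricella value as a product of $\Gamma$-values, hence of CM periods, and then an identification of that period expression with the special $L$-value predicted by Chowla--Selberg/Lerch-type formulas for the relevant Jacobi-sum characters. Controlling the rational factor --- proving equality modulo $\Q^\times$ rather than merely numerically --- is exactly where the conjecture remains open; accordingly, in the stated generality I expect to be able to establish rigorously only the non-degeneracy on the explicitly constructed subspace, and to verify part (2) numerically, as is done in Section \ref{numerical_Beilinson}.
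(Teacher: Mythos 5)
The statement you were asked about is a \emph{conjecture}, and the paper treats it as such: it is quoted from \cite{Beilinson}, \cite{DJZ}, \cite{Nekovar}, no proof is offered anywhere in the paper, and the only evidence supplied is the numerical verification in Section \ref{numerical_Beilinson} for a handful of curves of genus $1$, $2$, $3$. So there is no ``paper proof'' to compare yours against, and your candid framing --- that a full proof is out of reach and one can at best give a program plus numerics --- is the correct assessment. To that extent your proposal is honest and correctly scoped.

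However, two of the concrete steps in your program would fail even for the curves in this paper, so they should not be presented as realistic routes. First, your part (2) rests on the claim that the $\mu_N$-action makes $E_{N,n}$ a CM curve whose $L$-function factors into Hecke $L$-functions of Jacobi-sum characters, to be matched with $\Gamma$-values via Chowla--Selberg/Lerch. That is true for Fermat curves (the setting of Ross and Otsubo), but it is false for general $\la_1,\dots,\la_n$: the $\mu_N$-action on $Y$ alone does not produce CM. Indeed, all of the paper's test cases are non-CM --- the elliptic curves of conductors $92$ and $88$, the genus-$2$ Jacobian whose $L$-function is only controlled through the potential automorphy theorem of \cite{BCGP}, and the genus-$3$ case where the factorization obtained is $\jac(E_{4,2}) \stackrel{\Q}{\sim} E_2 \times \operatorname{Res}_{K/\Q}E'$ with $E_2$, $E'$ non-CM elliptic curves --- so no Jacobi-sum/CM-period matching is available, and the rational factor in part (2) cannot be reached this way. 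Second, your part (1) plan to generate $g\cdot[k:\Q]$ independent classes as the $\mu_N$-orbit of the single symbol $\{1-Y,X\}$, with nonsingularity of the regulator matrix deduced from Theorem \ref{main:intro}(2) plus independence of characters of $\mu_N$, does not work in general: for $N=2$ the orbit is too small as soon as $g\ge 2$, which is precisely why the paper must construct a genuinely new integral element $\{Y-X^N,X\}$ (Proposition \ref{newK2element}) for the hyperelliptic case, and even then the non-vanishing of the $2\times 2$ and $3\times 3$ regulator determinants is only checked numerically. Theorem \ref{main:intro}(2) gives non-vanishing of a \emph{single} pairing value $r_{\D}(\{1-Y,X\})(\delta)$; the entries $r_{\D}(\zeta^*\xi)(\sigma_*\delta)$ are values of distinct hypergeometric series, and no character-independence argument in the paper (or in your sketch) upgrades one non-zero entry to an invertible matrix. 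So the unconditional content you can actually extract from the paper's results is the non-triviality of the specific classes of Theorem \ref{main1}, i.e.\ a rank lower bound of $1$ on the constructed subspace --- not non-degeneracy of the pairing on a subspace of the full conjectural rank.
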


\section{Elements in motivic cohomology} \label{elements}
Let $X_{N, n}$ be the generalized Fermat curve over $k_{X_{N, n}}:=\Q(\la_1, \ldots, \la_n)$ defined by
\begin{align*}
X_{N, n} : \left\{
\begin{array}{l}
X^N+(-1)^nY_1^N=\lambda_1 W^N \\
X^N+(-1)^nY_2^N=\lambda_2 W^N \\
\qquad \vdots \\
X^N+(-1)^nY_n^N=\lambda_n W^N
\end{array}
\right.
\subset \p^{n+1}(\C), 
\end{align*}
where $\la_1, \ldots, \la_n \in \bQ^\times$ are different from each other.   
Let $x=X/W$ and $y_i=Y_i/W$ ($i=1, \ldots, n$) be the affine coordinates. 
We recall the polynomial 
\begin{align} \label{charpoly}
\Phi_{X_{N, n}}(T)=T\Big(\prod_{i=1}^n(T-\lambda_i)\Big)-1 \in \overline{\Q}[T].
\end{align}
\begin{thm}\label{mot}
Suppose that $\Phi_{X_{N, n}}(T)$ is decomposed in $\overline{\Q}[T]$ as follows: 
\begin{align} \label{Phi}
\Phi_{X_{N, n}}(T)=(T-\nu_1)\cdots (T-\nu_n)(T-(-1)^n(\nu_1 \cdots \nu_n)^{-1}),
\end{align}
where $\nu_1, \ldots, \nu_n$ are roots of unity in $\overline{\Q}$. 
Then there is a positive integer $M$ such that $\{1-xy_1 \ldots y_n, x\}^M \in \operatorname{Ker}{\tau}$, i.e. 
$$\xi_{X_{N,n}}:=\{1-xy_1 \ldots y_n, x\} \in H^2_{\mathscr{M}}(X_{N, n}, \Q(2)).$$ 
\end{thm}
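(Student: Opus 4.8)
The plan is to use the identification of $H^2_{\mathscr{M}}(X_{N,n},\Q(2))$ with $\operatorname{Ker}(\tau\otimes\Q)$ from Section~\ref{Beilinson conjecture}. Writing $f=1-xy_1\cdots y_n$ and $g=x$, it suffices to show that the tame symbol $\tau_P(\{f,g\})$ is a root of unity at every closed point $P$; for then each $\tau_P(\xi_{X_{N,n}})$ becomes trivial in $\kappa(P)^*\otimes\Q$, and choosing $M$ to be the least common multiple of the finitely many orders of these roots of unity gives $\xi_{X_{N,n}}^M\in\operatorname{Ker}\tau$. Since $\tau_P$ vanishes off $\operatorname{supp}(\dv f)\cup\operatorname{supp}(\dv g)$, I only need to inspect three pairwise disjoint loci on the smooth projective model: the zeros of $g$ (where $x=0$), the zeros of $f$ (where $xy_1\cdots y_n=1$), and the common pole locus at infinity (where $x=\infty$).

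The locus $x=0$ is immediate: there $f=1$, so $\ord_P f=0$ and $f(P)=1$, giving $\tau_P(\{f,g\})=f(P)^{\ord_P g}=1$. The conceptual heart is the locus $xy_1\cdots y_n=1$. Here I would first multiply the defining relations $y_i^N=(-1)^n(\la_i-x^N)$ to obtain $(xy_1\cdots y_n)^N=x^N\prod_{i=1}^n(x^N-\la_i)$; with $t=x^N$ this is exactly $(xy_1\cdots y_n)^N=\Phi_{X_{N,n}}(t)+1$. At a point where $xy_1\cdots y_n=1$ the left side is $1$, so $\Phi_{X_{N,n}}(t)=0$ and $t=x^N$ is a root of $\Phi_{X_{N,n}}$, hence a root of unity by the hypothesis~\eqref{Phi}; consequently $x(P)$ is a root of unity. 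As $x$ is then a unit at $P$ (so $\ord_P g=0$), one gets $\tau_P(\{f,g\})=g(P)^{-\ord_P f}=x(P)^{-\ord_P f}$, a root of unity. This is the one place where the assumption on the roots of $\Phi_{X_{N,n}}$ is used.

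The hard part will be the locus at infinity, where the naive closure of $X_{N,n}$ in $\p^{n+1}$ can be singular. The plan is to sidestep this by analysing the smooth model through the finite cover $X_{N,n}\to\p^1$ defined by $x$, whose fibre over $x=\infty$ is precisely the pole locus of $f$ and $g$. Setting $u=1/x$ and $w_i=y_i/x$, the relations become $w_i^N=(-1)^{n+1}(1-\la_i u^N)$, whose right-hand side is a unit equal to $(-1)^{n+1}\neq 0$ at $u=0$; hence the cover is unramified over $x=\infty$, every point $P$ above it has $\ord_P x=-1$ and $w_i(P)=\eta_i$ with $\eta_i^N=(-1)^{n+1}$. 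From $xy_1\cdots y_n=x^{n+1}\prod_i w_i$ I read off $\ord_P f=-(n+1)$, and evaluating leading terms yields
\[
\tau_P(\{f,g\})=(-1)^{\ord_P f\cdot\ord_P g}\left(\frac{x^{n+1}}{1-xy_1\cdots y_n}\right)\!(P)=(-1)^{n}\Big(\prod_{i=1}^{n}\eta_i\Big)^{-1}.
\]
Because $\big(\prod_i\eta_i\big)^N=\big((-1)^{n+1}\big)^{n}=(-1)^{n(n+1)}=1$, the product $\prod_i\eta_i$ is a root of unity, so $\tau_P(\{f,g\})$ is again a root of unity.

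Combining the three cases shows every $\tau_P(\xi_{X_{N,n}})$ is a root of unity, which completes the argument once $M$ is chosen as above. I expect the genuine obstacle to be the infinity computation: both controlling the ramification over $x=\infty$ on the smooth model (rather than on the possibly singular projective closure) and checking that the leading coefficient $\prod_i\eta_i$ is a root of unity, which ultimately rests on the parity identity $(-1)^{n(n+1)}=1$. The remaining cases are either trivial or reduce directly to the hypothesis~\eqref{Phi}.
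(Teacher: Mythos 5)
Your proposal is correct and takes essentially the same route as the paper: both verify that the tame symbol is a root of unity at the three relevant loci, namely the zeros of $x$ (trivially $1$), the zeros of $1-xy_1\cdots y_n$ (where the elimination $(xy_1\cdots y_n)^N = \Phi_{X_{N,n}}(x^N)+1$ reduces everything to the hypothesis~\eqref{Phi}), and the points at infinity (where a direct leading-term computation gives a root of unity). If anything, your treatment of the locus at infinity, via the coordinates $u=1/x$, $w_i=y_i/x$ and the observation that $w_i^N=(-1)^{n+1}(1-\la_i u^N)$ is a unit at $u=0$ so the cover is unramified there, is more careful than the paper's, which asserts $\ord_P(x)=-1$ and $\ord_P(1-xy_1\cdots y_n)=-(n+1)$ without justification.
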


\begin{proof}
For any $\overline{k}$-point $P=[X :Y_1 : \cdots : Y_n : W] \in X_{N, n}(\overline{\Q})$, we show that $\tau_P(\xi_{X_{N,n}})$ is torsion. 
When $X=0$, then $\ord_P(1-xy_1 \cdots y_n)=0$, hence
$$\tau_P(\xi_{X_{N,n}})=(1-x(P)y_1(P) \cdots y_n(P))^{\ord_P(x)}=1. $$
We can similarly show that $\tau_P(\xi_{X_{N,n}})=1$ for $Y_i=0$. 
When $W=0$, we put $P=[1 : \zeta_{2N} \zeta_N^{i_1} : \cdots : \zeta_{2N} \zeta_{N}^{i_n} : 0]$, where $1 \leq i_1, \ldots, i_n \leq N$.  
Since $\ord_P(x)=-1$ and $\ord_P(1-xy_1 \cdots y_n)=-(n+1)$, 
the tame symbol at $P$ is 
$$\tau_P(\xi_{X_{N,n}})=-\frac{x^{n+1}}{1-xy_1 \ldots y_n}(P)=\frac{1}{\zeta_{2N}^{n}\zeta_N^{i_1+\cdots i_n}},$$
hence $\tau_P(\xi_{X_{N,n}})$ is torsion. 
Suppose that $XY_1 \cdots Y_n W \neq 0$. 
We write $P=[\alpha : \beta_1 : \cdots \beta_n :1]$ since $W \neq 0$. 
Then $\ord_P(x)=0$, hence we need to show that
$$\tau_P(\xi_{X_{N,n}})=\frac{1}{x^{\ord_P(1-xy_1 \cdots y_n)}}(P)=\frac{1}{\alpha^{\ord_P(1-xy_1 \cdots y_n)}}(P)$$
is torsion. 
When $\ord_P(1-xy_1 \cdots y_n)=0$, then we have $\tau_P=1$, so we consider the case $\ord_P(1-xy_1 \cdots y_n)>0$. 
Note that $\al$, $\be_1, \ldots, \be_n$ satisfy the following equations:
$$
 \left\{
\begin{array}{l}
\al\be_1\cdots \be_n=1 \\
\al^N+(-1)^n\be_1^N=\lambda_1  \\
\al^N+(-1)^n\be_2^N=\lambda_2 \\
\qquad \vdots \\
\al^N+(-1)^n\be_n^N=\lambda_n. 
\end{array}
\right.
$$
By eliminating $\beta_1, \ldots, \beta_n$ and putting $\alpha^N=T$, we have
\begin{align} \label{tame}
T\prod(T-\lambda_i)-1=0\quad  \Longleftrightarrow \quad \Phi_{X_{N, n}}(T)=0. 
\end{align}
By the assumption \eqref{Phi}, $\Phi_{X_{N, n}}(T)$ has a decomposition in $\overline{\Q}[T]$:
\begin{align*} 
\Phi_{X_{N, n}}(T)=(T-\nu_1)\cdots (T-\nu_n)(T-(-1)^n(\nu_1\cdots \nu_n)^{-1})=0,
\end{align*}
where $\nu_i$'s are roots of unity.
Then all roots of \eqref{tame} are roots of unity, which finishes the proof.   
\end{proof}

Let $E_{N, n}$ be the superelliptic curve over $k_{E_{N, n}}:=\Q(\lambda_1, \ldots, \lambda_n)^{S_n}$ which is a 
smooth projective model of  the smooth affine curve
$$Y^N=f(X)=X(X-\lambda_1) \cdots (X-\lambda_n). $$
Here, $S_n$ is the $n$-th symmetric group acting on $\Q(\la_1, \ldots, \la_n)$ by permuting $\la_1, \ldots, \la_n$. 
Let 
$$\pi \colon E_{N, n} \times_{k_{E_{N, n}}} k_{X_{N, n}} \to E_{N, n}$$
be a base change. 
Then we have a surjective map over $k_{X_{N, n}}$ which is locally given by 
$$p \colon X_{N, n} \to E_{N, n} \times_{k_{E_{N, n}}} k_{X_{N, n}} ; \quad (x,y_1, \ldots, y_n) \mapsto (X, Y)=(x^N, xy_1 \cdots y_n) . $$
We construct an element of the motivic cohomology of $E_{N, n}$. 
Since
 $$(p^* \circ \pi^{*})(\{1-Y, X\})=N \xi_{X_{N,n}} \in H_{\mathscr{M}}^2(X_{N, n}, \Q(2))$$
 and $p^*$, $\pi^*$ are injective by the norm argument, 
we define an element of the motivic cohomology of $E_{N, n}$ by
$$\{1-Y, X\} \in H_{\mathscr{M}}^2(E_{N, n}, \Q(2)). $$

Let $E_{\Q(\mu_N)}:=E_{N, n} \times_{k_{E_{N, n}}} \Q(\mu_N) (\la_1, \ldots, \la_n)^{S_n}$.  
Then $\mu_N$ acts on $E_{\Q(\mu_N)}$ by 
$$(X, Y) \mapsto (X, \zeta  Y) \quad (\zeta \in \mu_N). $$
For $\{1-Y, X\} \in H_{\mathscr{M}}^2(E_{\Q(\mu_N)}, \Q(2)) \subset K_2^M(\Q(\mu_N)(E_{N, n}))$, 
we construct further elements in the motivic cohomology using this action: for $\zeta \in \mu_N$, 
$$\zeta^*\{1-Y, X\}=\{1-\zeta Y, X\} \in H_{\mathscr{M}}^2(E_{\Q(\mu_N)}, \Q(2)) 
\subset K_2^M(\Q(\zeta_N)(E_{N, n})). $$

\begin{prop} \label{intE}
Keep the notation and assumption as above. 
Further, assume $f(X)\in \Z[X]$. 
The symbol $\{1-Y, X\}$ is integral, i.e.
$$\{1-Y, X\} \in H^2_{\mathscr{M}}(E_{N, n}, \Q(2))_{\Z}. $$
\end{prop}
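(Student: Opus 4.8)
The plan is to check integrality directly on a regular proper model over $\Z$. First I would extract a simplification from the hypothesis $f(X)\in\Z[X]$: the coefficients of $f(X)=X\prod_{i=1}^n(X-\la_i)$ are, up to sign, the elementary symmetric functions $e_1,\dots,e_n$ of $\la_1,\dots,\la_n$, and these generate $k_{E_{N,n}}=\Q(\la_1,\dots,\la_n)^{S_n}$; hence $f\in\Z[X]$ forces $e_j\in\Z$ and therefore $k_{E_{N,n}}=\Q$, consistent with the target $K_2^M(\Q(E_{N,n}))$ appearing in the statement. So $E_{N,n}$ is a curve over $\Q$ with an affine model $Y^N=f(X)$, $f$ monic in $\Z[X]$, and by \cite[(3.5)]{DJZ} it suffices to fix a regular proper model $\mathcal{E}\to\spec\Z$ and prove that for every prime $p$ and every irreducible component $D$ of the special fibre $\mathcal{E}_p$ the tame symbol $\tau_D(\{1-Y,X\})=(-1)^{ab}\bigl((1-Y)^{b}/X^{a}\bigr)\big|_D$, with $a=\ord_D(1-Y)$ and $b=\ord_D(X)$, is torsion in $\kappa(D)^*$, i.e. vanishes in $\kappa(D)^*\otimes\Q$.

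Second, I would dispose of the primes of good reduction. For such $p$ the fibre $\mathcal{E}_p$ is the smooth irreducible reduction of $E_{N,n}$, and $\operatorname{div}(X)$ and $\operatorname{div}(1-Y)$ are horizontal, so $\ord_D(X)=\ord_D(1-Y)=0$ and $\tau_D$ is trivial. Thus only the finitely many bad primes remain, and the problem becomes a local analysis of the vertical components over each such $p$.

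The core of the argument, and the main obstacle, is this bad-fibre analysis, for which the guiding principle is the one already used for Theorem \ref{mot}: the zeros of $1-Y$ lie over $f(X)=1$, i.e. $\Phi_{X_{N,n}}(X)=0$, whose $X$-coordinates are the roots of unity $\nu_1,\dots,\nu_n,(-1)^n(\nu_1\cdots\nu_n)^{-1}$; the zeros of $X$ lie over $(0,0)$, where $1-Y\equiv 1$; and, $f$ being monic, the points at infinity contribute only powers of $\zeta_N$, uniformly in $p$. Since each $\nu_j$ is a unit, the horizontal zero loci of $X$ and of $1-Y$ stay disjoint at every prime, so the only collisions to analyse are among the $\{X=\nu_j\}$, at infinity, and at the exceptional components of a resolution. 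The plan is to take the regular model obtained by normalising $Y^N=f(X)$ and resolving, and to show, component by component, that the divisor on $D$ of $(1-Y)^{b}/X^{a}\big|_D$ vanishes — via the iterated residues of $\{1-Y,X\}$ at the codimension-two points lying on $D$ — so that $\tau_D(\{1-Y,X\})$ is constant in $\kappa(D)^*$; a leading-term computation at $X=\nu_j$, at $(0,0)$, and at infinity then identifies that constant as a root of unity, hence torsion.

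I expect the genuine difficulty to be twofold: producing the regular model and controlling the exceptional components introduced in resolution, where $a$ and $b$ need not vanish and the constancy of $(1-Y)^{b}/X^{a}$ on $D$ must be proved rather than assumed; and the most degenerate fibres, namely at $p\mid N$ or $p\mid\operatorname{disc}(f)$ and at the points at infinity. To avoid an ad hoc model-by-model study I would keep in reserve the transfer route: the finite morphism $X_{N,n}\to E_{N,n}\times_{k_{E_{N,n}}}k_{X_{N,n}}$ sends $\{1-Y,X\}$ to $N\{1-xy_1\cdots y_n,x\}$, and on $X_{N,n}$ every zero and pole of $1-xy_1\cdots y_n$ and of $x$ has all coordinates equal to roots of unity, so the same bookkeeping as in Theorem \ref{mot} makes every vertical tame symbol upstairs a root of unity; by norm compatibility of the tame symbol, applied to the pushforward $p_*$ with $p_*p^{*}=\deg(p)$, torsionness then descends to $E_{N,n}$. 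I would use whichever of the two routes keeps the exceptional-component analysis shortest.
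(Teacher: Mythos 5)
Your framework --- reduce, via \cite[(3.5)]{DJZ}, to showing that the tame symbol of $\{1-Y,X\}$ along every vertical irreducible divisor of a regular proper model over $\Z$ is torsion --- is the same as the paper's, and your dispatch of the good primes (both orders vanish, so the symbol is $1$) is correct, as is the observation that $f\in\Z[X]$ forces $k_{E_{N,n}}=\Q$. But the entire content of the proposition is the bad-fibre analysis, and there your proposal stops at a plan: you correctly identify the two hard points (exceptional components of a resolution, where the constancy of $(1-Y)^b/X^a$ on $\mathcal{D}$ ``must be proved rather than assumed,'' and the fibres at $p\mid N$, $p\mid \operatorname{disc}(f)$, and at infinity), but you execute neither of your two candidate routes, ending with ``whichever keeps the analysis shortest.'' That is precisely the gap. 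For comparison, the paper settles these points without ever computing the regular model $\widetilde{\mathcal{C}}$, only using that it dominates the normalization $\mathcal{C}$ of the naive model: (i) if $\mathcal{D}$ dominates a component of the fibre of $\spec\F_p[X,Y]/(Y^N-f(X))$, then $\{Y=1\}$ meets that fibre in a $0$-dimensional set, so $\ord_{\mathcal{D}}(1-Y)=0$ and the symbol dies; (ii) if $\mathcal{D}$ is exceptional over a singular point $(x_0,y_0)$, then $X,Y$ restrict to the \emph{constants} $x_0,y_0$ on $\mathcal{D}$, and the singularity equations $Ny_0^{N-1}=f'(x_0)=0$ together with $f(0)=0$ force either $y_0=0$ (so $1-Y\equiv 1$) or $p\mid N$ and $x_0\neq 0$ (so the symbol is the constant $x_0^{-\ord_{\mathcal{D}}(1-Y)}\in\overline{\F}_p^{\times}$, a root of unity); (iii) at infinity, the identity $\{1-WV^{-k},V^{-1}\}=\{WV^{-k},V^{-1}\}+\{V^kW^{-1}-1,V^{-1}\}$ together with the model equation $W^N=V^{Nk-(n+1)}\prod_i(1-\la_iV)$ reduces everything to symbols whose entries restrict to constants on $\mathcal{D}$. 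These three observations are what your sketch is missing.

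Your fallback ``transfer route'' has a more serious defect than incompleteness: the claim that ``the same bookkeeping as in Theorem \ref{mot} makes every vertical tame symbol upstairs a root of unity'' is a non sequitur. Theorem \ref{mot} computes tame symbols at closed points of the \emph{generic} fibre of $X_{N,n}$; these horizontal computations say nothing about orders of vanishing along vertical divisors of a model of $X_{N,n}$ over $\mathscr{O}_{k_{X_{N,n}}}$, which is exactly where integrality can fail --- the difficulties you flagged downstairs (components on which $1-xy_1\cdots y_n$ could vanish identically, exceptional divisors over singular points, points at infinity) reappear upstairs for a more complicated curve over a larger field. In addition, descending integrality through $p_*$ requires proving that pushforward preserves the integral subspace (extending $p$ to morphisms of regular proper models and checking compatibility of the transfer with the boundary maps), which is itself not free. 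It is worth noting that the paper's logic runs in the opposite direction for exactly this reason: it proves integrality directly on $E_{N,n}$, where the affine model $Y^N=f(X)$ with $f$ monic in $\Z[X]$ gives a concrete $\mathcal{C}$ to work with, and only then deduces integrality of $\xi_{X_{N,n}}$ upstairs (Corollary \ref{intX}) via the commutative diagram for $p^*\circ\pi^*$, since compatibility of \emph{pullback} with the boundary maps is what is readily available.
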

\begin{proof}
We mimic the proof of \cite[p.359, Theorem 8.3]{DJZ}. 
For simplicity, put $C=E_{N,n}$. 
Since $f(X)\in \Z[X]$ is monic, we can find  an integral proper flat model $\mathcal{C}$ of $C$ 
over $\Z$. In fact, we have only to take the normalization of the morphism 
${\rm Spec}\hspace{0.3mm}\Z[X,Y]/(Y^N-f(X))\lra \mathbb{P}^1_\Z,\ (X,Y)\mapsto X$. 

Let $\widetilde{\mathcal{C}}$ be a regular proper model over $\Z$ of $C$ 
with a modification $\pi:\widetilde{\mathcal{C}}\lra \mathcal{C}$.

For any prime $p$, let $\mathcal{D}$ be any irreducible divisor of 
the special fiber $\widetilde{\mathcal{C}}_p$ of $\widetilde{\mathcal{C}}$ at $p$. 
We may prove the vanishing of the tame symbol $T_\mathcal{D}$ (see \cite[p.343, (3.5)]{DJZ} 
for the tame symbol).
There are two cases we need to consider:
\begin{enumerate}
\item $\pi(\mathcal{D})$ is an irreducible component of the 
special fiber $\mathcal{C}_p$ of $\mathcal{C}$ at $p$;
\item $\pi(\mathcal{D})$ is a singular point of $\mathcal{C}_p$. 
\end{enumerate}

For the first case, we may work on $\mathcal{C}_p$ and also on the dense open subscheme 
$\mathcal{C}^\circ_p:={\rm Spec}\hspace{0.3mm}\F_p[X,Y]/(Y^N-f(X))$. Put 
$\mathcal{D}':=\pi(\mathcal{D})\cap \mathcal{C}^\circ_p$ and we may assume that 
it is dense in an irreducible component of $\mathcal{C}^\circ_p$.  
If $Y\equiv 1$ on $\mathcal{D}'$, then $\mathcal{D}'$ has to be of dimension zero but 
it is impossible. Thus, $\ord_{\mathcal{D}}(Y-1)=\ord_{\mathcal{D}'}(Y-1)=0$ and thus $T_\mathcal{D}=1$. 

For the second case, 
let $\pi(\mathcal{D}):=(x_0,y_0)$ be the singular point of $\mathcal{C}^\circ_p$. Then 
$Ny^{N-1}_0=f'(x_0)=0$. If $p\nmid N$, then $y_0=0$. Thus, $1-Y\equiv 1-y_0=1$ on 
$\mathcal{D}$. Hence, $T_\mathcal{D}=1$. When $p \mid N$, if $x_0=0$, $y^N_0=f(x_0)=0$ 
so that $1-y_0=1$ and $\ord_{\mathcal{D}}(Y-1)=0$, otherwise $\ord_{\mathcal{D}}(X)=0$ since 
$x_0\neq 0$. 
Thus, we have  $T_\mathcal{D}=1$.

What remains is to check it for the points at infinite. Let $k$ be the least positive  
integer $k$ such that $kN-(n+1)\ge 0$. 
Substituting $(X,Y)=(V^{-1},WV^{-k})$, the equation $Y^N=f(X)$ transforms into 
$W^N=V^{Nk-(n+1)}\prod_{i=1}^n(1-\la_i V)$. 
Any infinite points map to $(V,W)=(0,0)$ if $Nk > (n+1)$, and $(V, W)=(0, \zeta)$ ($\zeta \in \mu_N$) if $Nk=n+1$.   
We may work on the local coordinates $(V,W)$.
We may observe 
$$\{1-WV^{-k},V^{-1}\}=\{WV^{-k},V^{-1}\}+\{V^kW^{-1}-1,V^{-1}\}.$$
It follows from $\{V^{-1},V\}=0$ that 
$$\{WV^{-k},V^{-1}\}=N^{-1}\{W^NV^{-kN},V^{-1}\}=-
N^{-1}\{V^{-(n+1)}\prod_{i=1}^n(1-\la_i V),V\}=
-N^{-1}\{\prod_{i=1}^n(1-\la_i V),V\}.$$
Since $1-\la_i V\equiv 1$ on $\mathcal{D}$ for each $i$ ($1\le i\le n$), we have  
$T_{\mathcal{D}}(\{WV^{-k},V^{-1}\})=1$. 
Notice that $\ord_{\mathcal{D}}(V^kW^{-1})=\frac{kN}{kN-(n+1)}>0$ if $kN > n+1$, and $\ord_{\mathcal{D}}(V^kW^{-1})=k>0$ if $kN = n+1$. Thus, 
$V^kW^{-1}-1\equiv -1$ on $\mathcal{D}$. Hence, $\ord_{\mathcal{D}}(V^kW^{-1}-1)=0$ 
and it yields $T_{\mathcal{D}}(\{V^kW^{-1}-1,V^{-1}\})=1$. 
Thus, we complete the proof.
\end{proof}

Let $\mathscr{X}_{N, n}$ (resp. $\mathscr{E}_{N, n}$) be a regular proper model of $X_{N, n}$ (resp. $E_{N, n}$) over $\mathscr{O}_{k_{X_{N, n}}}$ (resp. $\mathscr{O}_{k_{E_{N, n}}}$) and 
$\mathscr{X}_{N, n, v}$ (resp. $\mathscr{E}_{N, n, v}$) be the reduction at a prime $v$.  
Then we have the commutative diagram:
\[
  \begin{CD}
   H^2_{\mathscr{M}}(E_{N, n}, \Q(2))_{\Z} @>>>  H^2_{\mathscr{M}}(E_{N, n}, \Q(2))  @>{\partial}>> \displaystyle \coprod_v K'_1\mathscr{E}_{N, n, v} \otimes \Q, \\
   @V{p^* \circ \pi^*}VV   @V{p^* \circ \pi^*}VV   @V{p^* \circ \pi^*}VV \\
  H^2_{\mathscr{M}}(X_{N, n}, \Q(2))_{\Z} @>>>  H^2_{\mathscr{M}}(X_{N, n}, \Q(2))  @>{\partial}>> \displaystyle \coprod_v K'_1\mathscr{X}_{N, n, v} \otimes \Q 
  \end{CD}
\]
where $\partial$ is the boundary map in $K$-theory (cf. \cite[p.226, (4.7.1)]{Ram}) which is nothing but the map 
\cite[(3.5)]{DJZ}.  
Thus as an immediate consequence from this diagram with Proposition \ref{intE}, we have 
the following result:
\begin{cor}\label{intX} Keep the notation and assumption as in Proposition \ref{intE}. 
Then, the symbol $\xi_{X_{N,n}}$ is integral, i.e.
$$\xi_{X_{N,n}} \in H^2_{\mathscr{M}}(X_{N, n}, \Q(2))_{\Z}.$$
\end{cor}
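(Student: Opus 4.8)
The plan is to read the result off the commutative diagram displayed just above the statement. The structural input is that the integral part $H^2_{\mathscr{M}}(C,\Q(2))_\Z$ is, by its definition via \cite[(3.5)]{DJZ}, precisely the kernel of the boundary map $\partial$ into $\coprod_v K'_1\mathscr{C}_{v}\otimes\Q$; this is exactly what the two horizontal rows of the diagram encode. Hence proving the integrality of $\xi_{X_{N,n}}$ reduces to verifying that $\partial(\xi_{X_{N,n}})=0$ in $\coprod_v K'_1\mathscr{X}_{N,n,v}\otimes\Q$.

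First I would invoke Proposition \ref{intE}, which asserts $\{1-Y,X\}\in H^2_{\mathscr{M}}(E_{N,n},\Q(2))_\Z$; equivalently, $\partial(\{1-Y,X\})=0$ in the upper right corner. Chasing the right-hand square and using its commutativity then gives
$$\partial\big((p^*\circ\pi^*)(\{1-Y,X\})\big)=(p^*\circ\pi^*)\big(\partial(\{1-Y,X\})\big)=0.$$
Combined with the identity $(p^*\circ\pi^*)(\{1-Y,X\})=N\,\xi_{X_{N,n}}$ recorded just before the diagram, this yields $\partial(N\,\xi_{X_{N,n}})=0$, and hence $N\,\partial(\xi_{X_{N,n}})=0$ by $\Q$-linearity of $\partial$.

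Because the target group is a $\Q$-vector space, multiplication by the nonzero integer $N$ is injective, whence $\partial(\xi_{X_{N,n}})=0$. By Theorem \ref{mot} the symbol $\xi_{X_{N,n}}$ already lies in the middle term $H^2_{\mathscr{M}}(X_{N,n},\Q(2))$ of the bottom row, and we have just shown it lies in $\operatorname{Ker}\partial$; therefore it belongs to the integral part $H^2_{\mathscr{M}}(X_{N,n},\Q(2))_\Z$, as claimed. Note that, unlike in the construction of $\{1-Y,X\}$ itself, the injectivity of $p^*\circ\pi^*$ is not needed here: we only transport a vanishing statement \emph{downward} along the diagram.

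There is no genuine obstacle—the corollary is an immediate diagram chase, which is why the text bills it as such. The only point that demands care, and on which the whole argument rests, is the commutativity of the right-hand square: namely that the boundary map $\partial$ is functorial with respect to the pullback $p^*\circ\pi^*$ along the finite morphism $p$. This is the compatibility of the tame-symbol map \cite[(3.5)]{DJZ} with pullback, and it is what legitimizes transporting the vanishing of $\partial$ from $E_{N,n}$ back to $X_{N,n}$.
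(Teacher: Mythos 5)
Your proposal is correct and is essentially the paper's own argument: the paper likewise deduces the corollary as an ``immediate consequence'' of the commutative diagram together with Proposition \ref{intE}, using $(p^*\circ\pi^*)(\{1-Y,X\})=N\,\xi_{X_{N,n}}$ and the fact that the integral part is cut out by the boundary map $\partial$ of \cite[(3.5)]{DJZ}. Your explicit diagram chase, including dividing by $N$ in the $\Q$-vector space target and the observation that injectivity of $p^*\circ\pi^*$ is not needed, just spells out what the paper leaves implicit.
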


\section{Computation of regulators}  \label{computation}
Let $\Phi_{X_{N, n}}(T)$ be the polynomial defined in \eqref{charpoly} and 
assume that 
$$f(T)=\Phi_{X_{N, n}}(T)+1=T(T-\la_1) \cdots (T-\la_n)$$
satisfies (\ref{thmassump}).
We may assume that $\la_n \in (0, 1)$ without loss of generality. 
\begin{dfn} \label{cycle} 
Define a path $\gamma$ on $E_{N, n}(\C)$ by
 $$\gamma \colon [0, 1] \to E_{N, n}(\C); \quad  t \mapsto \left(t, \sqrt[N]{f(t)}\right), $$
 where we take the branch of $\sqrt[N]{f(t)}$ along $\R_{\leq 0}$. 
 Then $\delta =\gamma -\overline{\gamma}$ is a cycle and defines an element of $H_1(E_{N, n}(\C), \Z)$. 
\end{dfn}
 
By the assumption (\ref{thmassump})-(iv), $f(t)\leq 0$ for $t \in [0, \lambda_n]$ and $f(t) \geq 0$ for $[\lambda_n, 1]$. 
If $t \in (0, \lambda_n)$, then we have $\operatorname{arg}(f(t))=\pi$ and 
$$\sqrt[N]{f(t)}=\exp{\left(\frac1N\log(-f(t))\right)} \zeta_{2N}=\sqrt[N]{-f(t)}\zeta_{2N}.$$
Thus, we have $$\delta =\gamma -\overline{\gamma} \in H_1(E_{N, n}(\C), \Z)^{-}.$$ 

\begin{thm} \label{main:1}
Under (\ref{thmassump}), it holds that  
\begin{align*}
r_{\D}(\{1-Y, X\})(\delta) = &-\frac{1}{2\pi} \notag
\sum_{k=1}^{\infty}
\frac1k
 ((-1)^{n-1}\la_1 \cdots \la_n)^{\frac{k}{N}}\la_n^{\frac{k}{N}}
\sin\frac{\pi k}{N}B\left(\frac{k}{N}, \frac{k}{N} \right) \\
& \times F^{(n-1)}_D \left. \left( 
\frac{k}{N}, -\frac{k}{N}, \ldots, -\frac{k}{N}, \frac{2k}{N}+1
\ \right| 
\frac{\la_n}{\la_1}, \ldots, \frac{\la_{n}}{\la_{n-1}}
\right), 
\end{align*}
and the right hand side is non-zero. 
In particular, $\{1-X,Y\}\in H^2_{\mathscr{M}}(E_{N, n}, \Q(2))$ and 
$\xi_{X_{N,n}}\in H^2_{\mathscr{M}}(X_{N, n}, \Q(2))$ are both non-trivial. 
\end{thm}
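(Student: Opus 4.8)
The plan is to compute the regulator pairing $r_{\D}(\{1-Y,X\})(\delta)$ directly using the explicit integral formula of Proposition \ref{regformula}, with $f=1-Y$ and $g=X$ along the cycle $\delta=\gamma-\overline{\gamma}$. Since $\delta$ lies in the $(-1)$-eigenspace and $\gamma$ is the complex conjugate path to $\overline{\gamma}$, I expect the boundary term $\log|g(P_0)|\int_{\widetilde\gamma}d\log g$ in \eqref{reg} to either vanish or cancel against its conjugate contribution, leaving
\[
r_{\D}(\{1-Y,X\})(\delta)=\frac{1}{2\pi}\operatorname{Im}\int_{\delta}\log(1-Y)\,d\log X .
\]
First I would parametrize the integral over $\gamma$ by $t\in[0,1]$ using $X=t$, $Y=\sqrt[N]{f(t)}$, so that $d\log X=dt/t$, and reduce the $\delta$-integral to twice the imaginary part of the $\gamma$-integral by the eigenspace property. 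The integrand becomes $\log(1-\zeta_{2N}\sqrt[N]{-f(t)})$ on the relevant subinterval, and conditions (ii)--(iii) of \eqref{thmassump} guarantee $|f(t)|<1$ so the principal branch of $\log(1-Y)$ admits the convergent Taylor expansion $-\sum_{k\ge1}\frac{Y^k}{k}$.

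The heart of the computation is then to expand $\log(1-Y)=-\sum_{k=1}^{\infty}\frac{1}{k}Y^k$ and interchange sum and integral, which is justified by the uniform bound $|f(t)|<1$. Each term contributes an integral of the form $\int_0^1 (f(t))^{k/N}\,\frac{dt}{t}$, up to the root-of-unity factor $\zeta_{2N}^k$ coming from the branch choice, whose imaginary part produces the $\sin(\pi k/N)$ factor. Writing $f(t)=t(t-\la_1)\cdots(t-\la_n)$ and factoring out the dominant behavior near $t=\la_n$, I would substitute $t=\la_n u$ (or a comparable affine change of variables) to convert $\int_0^{\la_n}\bigl(-f(t)\bigr)^{k/N}\frac{dt}{t}$ into a Beta-type integral times a product of factors $(1-\tfrac{\la_n}{\la_i}u)^{-k/N}$. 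Matching this against the Euler integral representation \eqref{int} with parameters $a=c-a=k/N$, hence $c=2k/N$, and $b_i=-k/N$, $x_i=\la_n/\la_i$, yields precisely the Appell--Lauricella function $F_D^{(n-1)}$ and the Beta factor $B(k/N,k/N)$ appearing in the statement. The prefactor $((-1)^{n-1}\la_1\cdots\la_n)^{k/N}\la_n^{k/N}$ emerges from collecting the signs in $f$ (controlled by (v)) and the Jacobian of the substitution.

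The main obstacle I anticipate is the careful bookkeeping of branches and signs: tracking $\operatorname{arg}(f(t))$ across the subintervals $[0,\la_n]$ and $[\la_n,1]$ (using (iv) to ensure there is exactly one sign change inside $(0,1)$), verifying that the portion of $\gamma$ over $[\la_n,1]$, where $f(t)\ge0$ so $Y$ is real and $\log(1-Y)$ contributes nothing to the imaginary part, indeed drops out, and confirming that the boundary term vanishes. These sign and branch considerations are exactly where conditions (ii)--(v) of \eqref{thmassump} are consumed, and getting the overall constant $-\tfrac{1}{2\pi}$ and the exponents right requires precision. For the non-vanishing claim in part (2), the cleanest route is to observe that under the stated constraints each summand is real with a definite sign (the $\sin(\pi k/N)$ factor, the positive Beta values, and the positivity of $F_D^{(n-1)}$ evaluated at arguments with the appropriate sign pattern forced by (iv)--(v) all contribute the same sign), so no cancellation can occur and the total is a nonzero real number; hence $r_{\D}$ does not annihilate the class, proving that $\{1-Y,X\}$ and $\xi_{X_{N,n}}$ are non-trivial in motivic cohomology.
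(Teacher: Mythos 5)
Your computation for part (1) follows essentially the same route as the paper: kill the boundary term in \eqref{reg} (the paper does this concretely by taking the base point $(1-\epsilon,\sqrt[N]{f(1-\epsilon)})$, so that $\log|X(P_0)|=\log(1-\epsilon)\to 0$), reduce the $\delta$-integral to $2\operatorname{Im}\int_{\gamma}$ (the paper writes this as $\operatorname{Im}(I_1-I_2)$ with $I_2=\overline{I_1}$), Taylor-expand $\log(1-Y)$ using (iii), observe that the piece over $[\lambda_n,1]$ is real and drops out, substitute $t=\lambda_n s$, and match against the Euler integral \eqref{int}. Two slips in your bookkeeping, though: after the substitution the factors are $(1-\tfrac{\lambda_n}{\lambda_i}s)^{+k/N}$, not $(1-\tfrac{\lambda_n}{\lambda_i}s)^{-k/N}$ (your own choice $b_i=-k/N$ already says this), and since the exponent of $(1-s)$ is $k/N=c-a-1$, the third parameter is $c=2k/N+1$, not $2k/N$. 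This off-by-one is not cosmetic: it is exactly what makes the Beta factor $B(k/N,k/N+1)=\tfrac12 B(k/N,k/N)$ and hence produces the prefactor $-\tfrac{1}{2\pi}$ in the statement rather than $-\tfrac{1}{\pi}$.

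The genuine gap is your non-vanishing argument for part (2). You assert that every summand has the same sign because the $\sin\frac{\pi k}{N}$ factor, the Beta values, and the Lauricella values "all contribute the same sign". That premise is false: $\sin\frac{\pi k}{N}$ is positive for $k\equiv 1,\ldots,N-1 \pmod{2N}$ and negative for $k\equiv N+1,\ldots,2N-1 \pmod{2N}$, so the series has infinitely many terms of each sign, and cancellation is a real possibility. (Everything else in the $k$-th term is indeed positive under (ii)--(v), so the oscillation of the sine is precisely the difficulty.) The paper handles this by grouping terms according to the residue $K$ of $k$ modulo $2N$: setting
\begin{equation*}
\beta_K=\sum_{k \equiv K \pmod{2N}} \frac1k\,((-1)^{n-1}\lambda_1 \cdots \lambda_n)^{\frac{k}{N}}\lambda_n^{\frac{k}{N}}\int_0^{1}s^{\frac{k}{N}-1}(1-s)^{\frac{k}{N}}\prod_{i=1}^{n-1}\Bigl(1-\frac{\lambda_n}{\lambda_i}s\Bigr)^{\frac{k}{N}}ds,
\end{equation*}
assumption (iii) ($|f(t)|<1$ on $[0,1)$) makes the integrand pointwise decreasing in $k$, whence $\beta_1>\beta_2>\cdots>\beta_{2N}>0$, and then
\begin{equation*}
r_{\D}(\{1-Y,X\})(\delta)=-\frac1\pi\sum_{K=1}^{N-1}\sin\frac{\pi K}{N}\,(\beta_K-\beta_{N+K})<0,
\end{equation*}
since each difference $\beta_K-\beta_{N+K}$ is positive. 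Some device of this kind --- pairing each positive-sine residue class $K$ with the negative-sine class $N+K$ and proving a monotonicity statement --- is indispensable; as written, your proof of the non-triviality of $\{1-Y,X\}$ and $\xi_{X_{N,n}}$ does not go through.
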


\begin{proof}
We treat the case when $n$ is even since the other case is similarly handled. 
For sufficiently small $\epsilon>0$, let $\delta_{\epsilon}$ be a cycle such that $\delta_{\epsilon}$ belongs to the homology class of $\delta$ and avoids the zeros and poles of $1-Y$ and $X$. 
We apply the regulator formula (Proposition \ref{regformula}) with the base point $(1-\epsilon, \sqrt[N]{f(1-\epsilon)})$, we have
$$r_{\D}(\{1-Y, X\})(\delta _{\epsilon})=\frac{1}{2\pi}\operatorname{Im} \left( \int_{\delta_{\epsilon}} \log(1-Y)d \log X -\log(1-\epsilon) \int_{\delta_{\epsilon}} d \log(1-Y)\right).  $$
The second term converges to $0$ as $\epsilon$ tends to $0$. Since the first term does not depend on the choice of the base point (depends only on the choice of a homology class), we have
\begin{align*}
r_{\D}(\{1-Y, X\})(\delta)&=\frac{1}{2\pi}\operatorname{Im}  \int_{\delta} \log(1-Y)d \log X \\
&=\frac{1}{2\pi} \operatorname{Im} \left(\int_{\gamma}-\int_{\overline{\gamma}}\right) \log(1-Y)d \log X \\
&=\frac{1}{2\pi} \operatorname{Im} (I_1- I_2), 
\end{align*}
where 
\begin{align*}
&I_1= \int_{\gamma} \log(1-Y)d \log X , \quad I_2= \int_{\overline{\gamma}}  \log(1-Y)d \log X.
\end{align*}
Then 
\begin{align*}
I_1&=\int_0^1\log(1-\sqrt[N]{f(t)}) d \log t \\
&=\int_0^{\lambda_n}\log(1-\zeta_{2N}\sqrt[N]{-f(t)}) \frac{dt}{t} + \int_{\lambda_n}^1\log(1-\sqrt[N]{f(t)}) \frac{dt}{t}.
\end{align*}
By the assumption (iii), considering the Taylor expansion of $\log(1-Y)$, we have 
\begin{align*}
I_1&=-\sum_{k=1}^{\infty}
\frac1k
\left\{
\zeta_{2N}^k
\int_0^{\lambda_n}
t^{\frac{k}{N}-1}
(\lambda_1-t)^{\frac{k}{N}}
\cdots
(\lambda_{n-1}-t)^{\frac{k}{N}}
(t-\lambda_n)^{\frac{k}{N}}
dt \right. \\
& \qquad \qquad  \qquad \qquad   \qquad \qquad  \qquad \qquad \left. 
+
\int_{\lambda_n}^1
t^{\frac{k}{N}-1}
(\lambda_1-t)^{\frac{k}{N}}
\cdots
(\lambda_n-t)^{\frac{k}{N}}
dt
\right\}
\end{align*}
Same as above, we have
\begin{align*}
I_2&=\int_0^1\log(1-\overline{\sqrt[N]{f(t)}}) d \log t \\
&=\int_0^{\lambda_n}\log(1-\zeta^{-1}_{2N}\sqrt[N]{-f(t)}) \frac{dt}{t} + \int_{\lambda_n}^1\log(1-\sqrt[N]{f(t)}) \frac{dt}{t},
\end{align*}
hence we have
\begin{align*}
I_2&=-\sum_{k=1}^{\infty}
\frac1k
\left\{
\zeta_{2N}^{-k}
\int_0^{\lambda_n}
t^{\frac{k}{N}-1}
(\lambda_1-t)^{\frac{k}{N}}
\cdots
(\lambda_{n-1}-t)^{\frac{k}{N}}
(t-\lambda_n)^{\frac{k}{N}}
dt  \right. \\
& \qquad \qquad  \qquad \qquad   \qquad \qquad  \qquad \qquad \left. 
+
\int_{\lambda_n}^1
t^{\frac{k}{N}-1}
(\lambda_1-t)^{\frac{k}{N}}
\cdots
(\lambda_n-t)^{\frac{k}{N}}
dt
\right\}.
\end{align*}
Therefore, we have
\begin{align*}
I_1-I_2&=
\sum_{k=1}^{\infty}
\frac1k
(\zeta_{2N}^{-k}-\zeta_{2N}^k)
\int_0^{\lambda_n}
t^{\frac{k}{N}-1}
(\lambda_1-t)^{\frac{k}{N}}
\cdots
(\la_{n-1}-t)^{\frac{k}{N}}
(t-\lambda_n)^{\frac{k}{N}}
dt \\
&=-2 \sqrt{-1}
\sum_{k=1}^{\infty}
\frac1k
\sin\frac{\pi k}{N}
\int_0^{\lambda_n}
t^{\frac{k}{N}-1}
(\lambda_1-t)^{\frac{k}{N}}
\cdots
(\la_{n-1}-t)^{\frac{k}{N}}
(t-\lambda_n)^{\frac{k}{N}}
dt. 
\end{align*}
By substituting $s=t/\lambda_n$, we have 
\begin{align} \label{reg1}
&r_{\D}(\{1-Y, X\})(\delta) \\
&=-\frac{1}{\pi} \notag
\sum_{k=1}^{\infty}
\frac1k
\la_n^{\frac{k}{N}}
\sin\frac{\pi k}{N}
\int_0^{1}
s^{\frac{k}{N}-1}
(\lambda_1-\la_ns)^{\frac{k}{N}}
\cdots
(\lambda_{n-1}-\la_ns)^{\frac{k}{N}}
(\la_ns-\la_n)^{\frac{k}{N}}
ds \\
&=-\frac{1}{\pi} \notag
\sum_{k=1}^{\infty}
\frac1k
 (-\la_1 \cdots \la_n)^{\frac{k}N} \la_n^{\frac{k}{N}}
\sin\frac{\pi k}{N}
\int_0^{1}
s^{\frac{k}{N}-1}
(1-s)^{\frac{k}{N}}
\prod_{i=1}^{n-1}
\left(1-\frac{\la_n}{\la_i}s\right)^{\frac{k}{N}}
ds.
\end{align} 
By \eqref{int}, we have
\begin{align*} 
r_{\D}(\{1-Y, X\})(\delta) =&-\frac{1}{2\pi} \notag
\sum_{k=1}^{\infty}
\frac1k
 (-\la_1 \cdots \la_n)^{\frac{k}N} \la_n^{\frac{k}{N}}
\sin\frac{\pi k}{N}
B\left(\frac{k}{N}, \frac{k}{N} \right) \\
& \times 
F^{(n-1)}_D \left. \left( 
\frac{k}{N}, -\frac{k}{N}, \ldots, -\frac{k}{N}, \frac{2k}{N}+1
\ \right| 
\frac{\la_n}{\la_1}, \ldots, \frac{\la_{n}}{\la_{n-1}}
\right).
\end{align*}
Here, we used the relation $B(x, x+1)=1/2 B(x, x)$. 
By the assumption (ii)-(v), the right-hand side of \eqref{reg1} is positive except for $\sin\frac{\pi k}{N}$. 
Put 
$$\beta_K=\sum_{k \equiv K \pmod{2N}} \frac1k
(-\la_1 \cdots \la_n)^{\frac{k}N}
 \la_n^{\frac{k}{N}}
\int_0^{1}
s^{\frac{k}{N}-1}
(1-s)^{\frac{k}{N}}
\prod
\left(1-\frac{\la_n}{\la_i}s\right)^{\frac{k}{N}}
ds
\quad 
(K=1, \ldots, 2N).
$$
Since $\beta_1 > \cdots > \beta_{2N}>0$ by the assumption (iii), we have
$$r_{\D}(\{1-Y, X\})(\delta)=-\frac{1}{\pi} \sum_{K=1}^{N-1} \sin \frac{\pi K}{N} (\beta_K -\beta_{N+K})<0.$$
We conclude that $r_{\D}(\{1-Y, X\})(\delta) \neq 0$.
\end{proof}

We give two explicit (infinitely many) examples for Theorem \ref{main:1}. We also use it to numerically test 
the Beilinson conjecture for some cases among these in Section \ref{numerical_Beilinson}.

\begin{exa} \label{ex1}
Let 
\begin{align*} 
\Phi_{X_{N, n}}(T)&=(T^n-1)(T+1)=T^{n+1}+T^n-T-1. 
\end{align*}
Then $\Phi_{X_{N, n}}(T)$ and $f(T)$ satisfy (\ref{thmassump}), and the superelliptic curve $E_{N, n}$ is given by 
$$E_{N, n} : Y^N=X^{n+1} + X^n -X. $$
Since $(-1)^{n-1}\la_1 \cdots \la_n=1$, we have 
\begin{align*}
r_{\D}(\{1-Y, X\})(\delta) =&-\frac{1}{2\pi} \notag
\sum_{k=1}^{\infty}
\frac1k
 \la_n^{\frac{k}{N}}
\sin\frac{\pi k}{N}B\left(\frac{k}{N}, \frac{k}{N} \right) \\
& \times F^{(n-1)}_D \left. \left( 
\frac{k}{N}, -\frac{k}{N}, \ldots, -\frac{k}{N}, \frac{2k}{N}+1
\ \right| 
\frac{\la_n}{\la_1}, \ldots, \frac{\la_{n}}{\la_{n-1}}
\right).
\end{align*}
\end{exa}

\begin{exa} \label{ex2}
Let $\Phi_{X_{N, n}}(T)=(T+1)^{n+1-l}(T-1)^l$, 
where $l$ is an odd integer such that 
$$0<l < \frac{n+1}2, \quad \text{and} \quad \left|f\left(1- \frac{2l}{n+1}\right)\right| < 1. $$ 
Then $\Phi_{X_{N, n}}(T)$ and $f(T)$ satisfy (\ref{thmassump}), and the superelliptic curve $E_{N, n}$ is given by 
$$E_{N, n} : Y^N=(X+1)^{n+1-l} (X-1)^l +1. $$
Since $(-1)^{n-1}\la_1 \cdots \la_n=(-1)^{l-1} (n+1-2l)$, we have 
\begin{align*}
r_{\D}(\{1-Y, X\})(\delta) =&-\frac{1}{2\pi} \notag
\sum_{k=1}^{\infty}
\frac1k
((-1)^{l-1}(n+1-2l))^{\frac{k}N}
 \la_n^{\frac{k}{N}}
\sin\frac{\pi k}{N}B\left(\frac{k}{N}, \frac{k}{N} \right) \\
& \times F^{(n-1)}_D \left. \left( 
\frac{k}{N}, -\frac{k}{N}, \ldots, -\frac{k}{N}, \frac{2k}{N}+1
\ \right| 
\frac{\la_n}{\la_1}, \ldots, \frac{\la_{n}}{\la_{n-1}}
\right).
\end{align*}
\end{exa}

\section{Numerical verification of the Beilinson conjecture} \label{numerical_Beilinson}
In this section, we numerically test the Beilinson conjecture for $E_{N,n}$ with small genus. 
Let $g_{E_{N, n}}$ be the genus of the superelliptic curve $E_{N, n}$. 
By the Riemann-Hurwitz formula, we see that  
$$g_{E_{N, n}}=\frac{(N-1)n-\{\gcd(N, n+1)-1\}}{2}.$$
Thus, $E_{N, n}$ is an elliptic curve if and only if $(N, n)=(2, 2), (3, 2)$ and $(2, 3)$.  
We first handle the case of elliptic curves over $\Q$. 
For other cases, we give additional integral elements to test the Beilinson conjecture. 
Although this is away from our main theme, but it has an independent interests.   

\subsection{The case of elliptic curves over $\Q$} 
For any elliptic curve $E$ over $\Q$, we remark that the $L$-function $L(E,s)$ extends holomorphically in $s$ to the whole $\C$ by automorphy of $E$. Thus, we can consider 
$L^{(r)}(E,0)$ for any non-negative integer $r$. The same is true for the base change of $E$ to 
the cyclic extension over $\Q$ (\cite{Langlands}). 
We use this fact without further comment. 

For $n=2$, Tokiwa \cite{Tokiwa} studied the elliptic curves $E_2$ and $E_3$ over $\Q$ defined by 
$$
E_2 : Y^2=X^3+X^2-X,  \label{numericalex1} \ E_3 : Y^3=X^3+X^2-X, \label{numericalex2}
$$
which are quotients of $X_{2, 2}(\la_1, \la_2)$ and $X_{3, 2}(\la_1, \la_2)$ for $(\la_1, \la_2)=(-\frac{1+\sqrt{5}}2, -\frac{1-\sqrt{5}}2)$, respectively. 
In these cases, the regulators in Theorem \ref{main:intro} are described by using Gaussian hypergeometric functions, and he verified numerically the Beilinson conjecture \cite[Conjecture 6.13]{Tokiwa}:  
$$
r_{\D}(\{1-Y, X\}^2)(\delta) \approx 20 \cdot L^{(1)}(E_{2}, 0),\ 
r_{\D}(\{1-Y, X\}^2)(\delta) \approx \frac13 \cdot L^{(1)}(E_{3}, 0),
$$
respectively. 

We consider the remaining case $(N, n)=(2, 3)$. There are only two cases:
\begin{align*}
&E_{2, 3} : Y^2=(X^3-1)(X+1)+1=X^4+X^3-X, \\
&E'_{2, 3} : Y^2=(X+1)^3(X-1)+1=X^4+2X^3-2X. 
\end{align*}

We first consider $E_{2, 3}$ which is the elliptic curve in Example \ref{ex1} for $N=2$ and $n=3$. 
Put 
\begin{align*}
R: &
=
-\frac{1}{2\pi}\sum_{k=1}^{\infty}
\frac1k
 \la_3^{\frac{k}{2}}
\sin\frac{\pi k}{2}
B\left(\frac{k}{2}, \frac{k}{2} \right)
F_1 \left( \left. 
\frac{k}2, -\frac{k}2, -\frac{k}2, k+1
\ \right| 
\frac{\la_3}{\la_1}, \frac{\la_{3}}{\la_{2}}
\right),
\end{align*}
where $F_1(\ |\ x_1, x_2)=F_D^{(2)}(\ |\ x_1, x_2)$ is the Appell function. 
By using Mathematica, we have
$$R=-0.47095904334493274691418567400102924042656389674994 \ldots.$$
On the other hand, 
the Weierstrass model of $E_{2, 3}$ is 
$$Y^2=X^3-X+1, $$
whose conductor is $92$. 
By using Magma, we have 
$$L^{(1)}(E_{2, 3}, 0)=-1.4128771300347982407425570220030877212796916902498\ldots. $$
Thus, we have numerically 
$$R \approx \frac13 \cdot L^{(1)}(E_{2, 3}, 0)$$
with a precision of $50$ decimal digits. 

Next we consider $E'_{2, 3}$ 
which is the elliptic curve in Example \ref{ex2} for $N=2$, $n=3$ and $l=1$. 
Put 
\begin{align*}
R: &
=
-\frac{1}{2\pi}\sum_{k=1}^{\infty}
\frac1k
(2 \la_3)^{\frac{k}{2}}
\sin\frac{\pi k}{2}
B\left(\frac{k}{2}, \frac{k}{2} \right)
F_1 \left( \left. 
\frac{k}2, -\frac{k}2, -\frac{k}2, k+1
\ \right| 
\frac{\la_3}{\la_1}, \frac{\la_{3}}{\la_{2}}
\right). 
\end{align*}
By using Mathematica, we have
$$R=-0.66881964039649037504127102520475989774413452667566 \ldots. $$
On the other hand, the Wierstrass model of $E_{2, 3}$ is 
$$Y^2=X^3-4X+4, $$
whose conductor is $88$.  
By using Magma, we have 
$$L^{(1)}(E_{2, 3}, 0)=-1.3376392807929807500825420504095197954882690533513 \ldots. $$
Thus, we have numerically 
$$R \approx \frac12 \cdot L^{(1)}(E'_{2, 3}, 0)$$
with a precision of $50$ decimal digits. 

\subsection{The case of a hyperelliptic curve of genus $2$ over $\Q$} 
In this subsection, we consider the hyperelliptic curve of genus $2$ defined by 
\begin{align} \label{hyperelliptic}
E_{2,4}:Y^2=f(X)=(X^4-1)(X+1)+1=X^5+X^4-X. 
\end{align}
To check the Beilinson conjecture of \eqref{hyperelliptic} numerically, we need to construct one more integral element in the motivic cohomology. 

\begin{prop} \label{newK2element}
Let $E_{2, 2N}$ be the hyperelliptic curve of genus $N$ defined by 
\begin{align}
Y^2=X^{2N+1}+X^{2N}-X. \label{hyperelliptic2}
\end{align}
Then the symbol $\{Y-X^N, X\}$ defines an element of $H^2_{\mathscr{M}}(E_{2, 2N}, \Q(2))_\Z$. 
\end{prop}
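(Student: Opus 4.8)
The plan is to prove the two claims bundled in the statement in turn: that $\{Y-X^N,X\}$ lies in the rational motivic cohomology $H^2_{\mathscr{M}}(E_{2,2N},\Q(2))$ (its tame symbol is torsion at every closed point of the generic fibre), and that it is integral, the latter by imitating the proof of Proposition \ref{intE}. Writing $f(X)=X^{2N+1}+X^{2N}-X$, both computations are governed by the factorization
\[
(Y-X^N)(Y+X^N)=Y^2-X^{2N}=f(X)-X^{2N}=X^{2N+1}-X=X(X^{2N}-1),
\]
which pins down $\dv(Y-X^N)$: its zeros lie over $X=0$ and over the $2N$-th roots of unity, and its only pole is at infinity.

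For the rational part I would first determine $\dv(Y-X^N)$ explicitly. Since $f(0)=0$ but $f'(0)=-1\neq0$, the point $(0,0)$ is a smooth ramification point with $\ord_{(0,0)}(X)=2$ and $\ord_{(0,0)}(Y-X^N)=1$; for each $\zeta$ with $\zeta^{2N}=1$ one has $f(\zeta)=1$, so $Y-X^N$ has a simple zero exactly at $(\zeta,\zeta^N)$; and at the unique point at infinity $Y-X^N$ has a pole of order $2N+1$, balancing the $1+2N$ zeros. I would then evaluate the tame symbols $\tau_P(\{Y-X^N,X\})$ at the three types of support point. At $(\zeta,\zeta^N)$ one has $\ord(X)=0$, so the symbol is $\zeta^{-\ord(Y-X^N)}$, a root of unity; at $(0,0)$ the formula gives $(Y-X^N)^2/X$ evaluated at $(0,0)$, which a one-line local computation shows to be $-1$; and at infinity, passing to the coordinates $X=V^{-1},\ Y=WV^{-(N+1)}$ (so that $Y-X^N=V^{-(N+1)}(W-V)$ and $W^2=V(1+V-V^{2N})$) one finds the symbol equals $\lim X^{2N+1}/(Y-X^N)^2=\lim V/(W-V)^2=1$. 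All three values are roots of unity, so the symbol is torsion and the element lies in $H^2_{\mathscr{M}}(E_{2,2N},\Q(2))$.

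For integrality I would follow Proposition \ref{intE} essentially verbatim. As $f\in\Z[X]$ is monic, take the normalization $\mathcal{C}$ of $\spec\Z[X,Y]/(Y^2-f(X))\to\p^1_\Z$, a regular model $\pi:\widetilde{\mathcal{C}}\to\mathcal{C}$, and check that the tame symbol $T_{\mathcal{D}}$ is trivial (up to torsion) for every vertical prime divisor $\mathcal{D}$. On a divisor dominating a component of a special fibre, neither $X$ nor $Y-X^N$ vanishes at the generic point, so $T_{\mathcal{D}}=1$. For a divisor contracted to a singular point $(x_0,y_0)$ of a reduction $\mathcal{C}^\circ_p$, the crucial point is that $f'(0)=-1$ is a unit in every characteristic, so $0$ is never a multiple root of $f$ and every such $x_0$ is non-zero; therefore $X\equiv x_0$ is a unit along $\mathcal{D}$, forcing $\ord_{\mathcal{D}}(X)=0$ and $T_{\mathcal{D}}=x_0^{-\ord_{\mathcal{D}}(Y-X^N)}$, which is a root of unity. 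Finally the point at infinity reduces to the smooth point $(V,W)=(0,0)$ in every fibre, because the derivative of $V(1+V-V^{2N})$ at $V=0$ equals $1$; hence no blow-up is needed there and infinity is already covered by the first case. Every $T_{\mathcal{D}}$ being a root of unity, it dies after $\otimes\Q$, so the symbol is integral.

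The step I expect to be the main obstacle is the analysis of the bad reductions in the integrality part, above all characteristic $2$. There $f'(X)=(X^N-1)^2$, so the reduction is singular precisely at the points with $\zeta^N=1$, and at these points $Y-X^N$ itself vanishes ($y_0=1=x_0^N$); unlike the clean situation of $\{1-Y,X\}$ in Proposition \ref{intE}, one cannot claim that $Y-X^N$ is a unit near these divisors. The argument is saved by transferring the work to the second entry $X$, which is a unit there precisely because $0$ is never a singular $X$-value --- this is the one place where the specific shape $f(X)=X^{2N+1}+X^{2N}-X$, hence $f'(0)=-1$, is used in an essential way. I should also keep in mind that it suffices for the tame symbols to be roots of unity rather than exactly $1$, since the integrality condition is stated with $\Q$-coefficients.
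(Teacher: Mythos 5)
Your proof is correct and follows essentially the same route as the paper: the same divisor computation for $Y-X^N$, the same three tame-symbol evaluations at $(0,0)$, $(\zeta,\zeta^N)$ and infinity (in the identical coordinates $X=V^{-1}$, $Y=WV^{-(N+1)}$, where your equation $W^2=V(1+V-V^{2N})$ is the correct one), and integrality by the method of Proposition \ref{intE}, which is exactly what the paper invokes. You in fact give more detail than the paper, whose proof omits the integrality check entirely; your key observation --- that in characteristic $2$ the singular points $(x_0,1)$ with $x_0^N=1$ are zeros of $Y-X^N$, so one must instead use $\ord_{\mathcal{D}}(X)=0$ (valid because $f'(0)=-1$ is a unit in every characteristic, making the tame symbol a root of unity, which suffices after $\otimes\,\Q$) --- is precisely the adaptation needed, and it is the same trick the paper already uses in the $p\mid N$ case of Proposition \ref{intE}.
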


\begin{proof}
 For any $\overline{\Q}$-point $P \in E_{2, 2N}(\overline{\Q})$, we show that the image under the tame symbol $\tau_P(\{Y-X^N, X\})$ is torsion. 
 Note that  
\begin{align*}
& \operatorname{div} (Y-X^N)= (0, 0) + \sum_{\zeta \in \mu_{2N}} (\zeta, \zeta^N) -(2N+1) (\infty), \\
& \operatorname{div} (X)= 2(0, 0) - 2 (\infty). 
\end{align*} 
If $P \neq (0, 0), (\zeta, \zeta^N), \infty$, then $\tau_P(\{Y-X^N, X\})=1$, hence it suffeces to show the case $P = (0, 0), (\zeta, \zeta^N), \infty$. 
If $P=(\zeta, \zeta^N)$, then $\tau_P(\{Y-X^N, X\})$ is torsion. 
If $P=(0, 0)$, then we have 
\begin{align*}
\tau_P(\{Y-X^N, X\})
&=- \left.\dfrac{(Y-X^N)^2}{X} \right|_{(X, Y)=(0, 0)} \\
&=-\left.\left(\frac{Y^N}{X} -2X^{N-1} Y +X^{2N-1} \right) \right|_{(X, Y)=(0, 0)} \\
&= - \left.\left(X^{2N}+X^{2N-1} -1 -2X^{N-1} Y +X^{2N-1}\right) \right|_{(X, Y)=(0, 0)} \\
&= 1, 
\end{align*}
hence $\tau_P(\{Y-X^N, X\})$ is trivial.  
Substituting $(X, Y)=(V^{-1}, WV^{-{(N+1)}})$, the equation \eqref{hyperelliptic2} transfoms into 
$$W^2=V+V^2-V^{2N-1}. $$
Then the infinite points map to $(V, W)=(0, 0)$ and we may work on the local coordinates $(V, W)$. 
We have 
\begin{align*}
\tau_P(\{Y-X^N, X\}) 
&=- \left. \dfrac{X^{2N+1}}{(Y-X^N)^2}\right|_{(X, Y)=P} \\
&= - \left.\dfrac{V^{-(2N+1)}}{(WV^{-(N+1)} - V^{-N})^2} \right|_{(V, W)=(0, 0)}\\
&=-\left. \dfrac1{W^2V^{-1} -2W+V}\right|_{(V, W)=(0, 0)} \\
&=- \left.\dfrac1{1+V-V^{2N-2}-2W+V} \right|_{(V, W)=(0, 0)} \\
&=- 1, 
\end{align*}
hence $\tau_P(\{Y-X^N, X\})$ is torsion. 

We can also check the integrality of our element as in Proposition \ref{intE} and 
thus we omit the details. 
\end{proof}

Let us turn to consider $E_{2,4}$. 
By Proposition \ref{newK2element},  
we have $\{Y-X^2, X\} \in H^2_{\mathscr{M}}(E_{2, 4}, \Q(2))_{\Z}$.  
Let $\delta \in H_1(E_{2, 4}(\C), \Z)^-$ as before. 
We construct another cycle $\delta'$ as follows. 
Define a path  $\gamma'$ on $E_{2, 4}(\C)$ by 
  $$\gamma' \colon ( -\infty, -1] \to E_{2, 4}(\C); \quad  t \mapsto \left(t, \sqrt[N]{f(t)}\right), $$
 where the branches are taken in $\R_{\leq 0}$.   
Let $\la_1$ be a unique negative real roots of $f(X)=0$. Then 
we have $f(t)\leq 0$ for $t \in (-\infty, \la_1]$ and $f(t)\ge  0$ for $t \in [\la_1, -1]$.  
Similarly the construction of $\delta$, $\delta' =\gamma' -\overline{\gamma'}$ becomes a cycle and defines an element of $H_1(E_{2, 4}(\C), \Z)^-$ (see Definition \ref{cycle}). 

We define the regulator determinant $R$ by 
$$R=\left|
\operatorname{det}
\begin{pmatrix}
r_{\D}(\{1-Y, X\})(\delta) &  r_{\D}(\{1-Y, X\})(\delta') \\
r_{\D}(\{Y-X^2, X\})(\delta)  & r_{\D}(\{Y-X^2, X\}(\delta')  \\
\end{pmatrix}
\right|.$$
By using Mathematica, we have 
\begin{align*}
&r_{\D}(\{1-Y, X\})(\delta)=-0.50222774458567807234309977656150357407699614640711 \ldots,   \\
&r_{\D}(\{1-Y, X\})(\delta')= -0.25869891405241850202655302706581105434092244125967 \ldots , \\
&r_{\D}(\{Y-X^2, X\})(\delta)=0.28090959342921653089311129619302051571945475691571 \ldots,   \\
&r_{\D}(\{Y-X^2, X\}(\delta')=-0.74910711218020537922324806983278156297750025766609 \ldots, 
\end{align*}
hence we obtain 
$$R=0.44889338217039979100950815236832711055130985508762 \ldots. $$
Since $E_{2,4}$ is of genus 2, by \cite[p.156, Theorem 1.1.2]{BCGP} it is potentially automorphic and $L(E_{2,4},s)$ has 
the expected functional equation. Thus, unconditionally, we have 
$L^{(2)}(E_{2, 4}, 0)=\frac{w\cdot N}{(2\pi)^4}L(C,2)$ with the root number 
$w=-1$ and the conductor $N=4528=2^4\cdot 283$ (see
\cite[Genus 2 curve 4528.b.72448.1]{LMFDB} for $w$ and $N$). Hence, 
we can consider the value $L^{(2)}(E_{2, 4}, 0)$, and 
by using Magma, we have 
$$L^{(2)}(E_{2, 4}, 0)=-3.5911470573631983280760652189466168844104788407010 \ldots. $$
Thus, we have numerically 
$$R  \approx - \frac18 \cdot L^{(2)}(E_{2, 4}, 0)$$
with a precision of $50$ decimal digits.

\subsection{The case of a superelliptic curve of genus $3$ over $\Q$} 
Let $E_{4,2}$ be the superelliptic curve of genus $3$ over $\Q$ defined by 
$$Y^4=X^3+X^2-X, $$
which is a quotient of $X_{4, 2}(\la_1, \la_2)$ for $(\la_1, \la_2)=(-\frac{1+\sqrt{5}}2, -\frac{1-\sqrt{5}}2)$. 
We have the automorphism $\sigma$ of order $4$ defined by 
$$\sigma \colon E_{4,2} \to E_{4,2}; \quad (X, Y) \mapsto \left(-\frac1X, \frac{Y}{X^3} \right). $$
Let $\xi:=\{1-Y, X\} \in H^2_{\mathscr{M}}(E_{4,2}, \Q(2))_{\Z}$ and $\delta \in H_1(E_{4,2}(\C), \Z)^-$ as before. 
By using the automorphism $\sigma$, we construct $3$ elements in the integral part of the motivic cohomology 
$\xi$, $\sigma^* \xi$ and $(\sigma^2)^* \xi$, and 
$3$ homology cycles $\delta$, $\sigma_*\delta$ and $(\sigma^2)_*\delta$.  
We define the regulator determinant $R$ by 
$$R=\left|
\operatorname{det}
\begin{pmatrix}
r_{\D}(\xi)(\delta) & r_{\D}(\xi)(\sigma_*\delta) & r_{\D}(\xi)((\sigma^2)_*\delta) \\
r_{\D}(\sigma^*\xi)(\delta) & r_{\D}(\sigma^*\xi)(\sigma_*\delta)  & r_{\D}(\sigma^*\xi)((\sigma^2)_*\delta)\\
r_{\D}((\sigma^2)^*\xi)(\delta) & r_{\D}((\sigma^2)^*\xi)(\sigma_*\delta) & r_{\D}((\sigma^2)^*\xi)((\sigma^2)_*\delta) 
\end{pmatrix}
\right|.$$
By using Mathematica, we have 
\begin{align*}
r_{\D}(\xi)(\delta)&=r_{\D}((\sigma^2)^*\xi)((\sigma^2)_*\delta) \\
& =-0.98844708489657058704834105512085052973343416105202 \ldots , \\
r_{\D}(\xi)(\sigma_*\delta)&=r_{\D}(\sigma^*\xi)(\delta) \\
&=-0.70555740813628736374241199707594442755919668795397 \ldots , \\
r_{\D}(\xi)((\sigma^2)_*\delta) &= r_{\D}(\sigma^*\xi)(\sigma_*\delta)=r_{\D}((\sigma^2)^*\xi)(\delta) \\
& =0.58887994509588812937108115517011098298739696261455 \ldots , \\
r_{\D}(\sigma^*\xi)((\sigma^2)_*\delta) &= r_{\D}((\sigma^2)^*\xi)(\sigma_*\delta) \\
&=0.30599026833560490606515209712520488081315948951649 \ldots, 
\end{align*}
hence we obtain 
$$R=0.70147792522235455249515324047670619687499664105676 \ldots. $$
On the other hand, we have an isogeny over $\Q$ 
$$\jac(E_{4,2}) \stackrel{\Q}{\sim} E_2 \times \operatorname{Res}_{K/\Q} E', $$
where $E'$ is the elliptic curve over $K=\Q(i)$ defined by 
$$Y^2=X^3+(-8 i +4)X$$
and it is the quotient curve $E_{4,2}/\langle \rho \rangle $ of $E_{4,2}$ for 
$\rho \colon (X,Y)\mapsto (-X^{-1},iYX^{-1})$. In fact, the quotient map $E_{4,2}\lra 
E'=E_{4,2}/\langle \rho \rangle$ is given by 
$(X,Y)\mapsto ((X+i)^2Y^{-1},(X+i)(X^2+(2-2i)X-1)Y^{-3})$. 
Therefore, we have 
$$L(E_{4,2}, s)=L(E_2, s)L(E', s). $$
Note that both of $L(E_2, s)$ and $L(E', s)$ have holomorphic continuation to $\C$ in $s$.  
Since $L(E_2, 0)=L(E', 0)=L^{(1)}(E', 0)=0$, we compute by using Magma
\begin{align*}
L^{(3)}(E_{4,2}, 0)&=6\cdot L^{(0)}(E_2, 0) L^{(2)}(E', 0) \\
&=0.6475180848206349715339876065938826432692415861783 \ldots. 
\end{align*}
Thus, we have numerically 
$$R \approx \dfrac{13}{12} \cdot L^{(3)}(E_{4,2}, 2)$$
with a precision of $50$ decimal digits.

\subsection{Beilinson conjecture over the quadratic field}\label{Beilinson over K}
We consider the elliptic curve $E_{3, 2}$ over $\Q$ in Example \ref{ex1}.  
Let $K=\Q(\zeta_3)$ and $E_K=E_{3, 2} \times_{\operatorname{Spec} \Q} \spec K$ be the base change. Since $E_K$ is automorphic, $L(E_K,s)$ has holomorphic continuation to $\C$ in $s$. 
We consider the Beilinson conjecture for $E_K$. 
Note that $H_1(E_K(\C), \Z)^-$ is the $2$-dimensional; we have an isomorphism 
$$H_1(E_K(\C), \Q)^- \cong \left(H_1(E_{3, 2}(\C), \Z) \oplus H_1(E_{3, 2}(\C), \Z) \right)^- \cong H_1(E_{3, 2}(\C), \Z),$$
where the complex conjugation acts on this by swapping conjugate pairs of complex
embeddings of $K$.

Let $\xi \in H^2_{\mathscr{M}}(E_K, \Q(2))_{\Z}$ and $\delta \in H_1(E_K(\C), \Z)^-$ as before. 
For two elements $\xi, (\zeta-\overline{\zeta})^* \xi \in H_{\mathscr{M}}^2(E_K, \Q(2))_{\Z}$, and two cycles $\delta,  (\zeta-\overline{\zeta})_* \delta\in H_1(E_K(\C), \Z)^-$ ($\zeta \in \mu_3 \setminus \{1\}$), we define the regulator determinant $R$ by 
$$R=\left|
\operatorname{det}
\begin{pmatrix}
r_{\D}(\xi)(\delta) &r_{\D}((\zeta-\overline{\zeta})^* \xi)(\delta) \\
 r_{\D}(\xi)((\zeta-\overline{\zeta})_* \delta) & r_{\D}((\zeta-\overline{\zeta})^* \xi)((\zeta-\overline{\zeta})_* \delta) \\
\end{pmatrix}
\right|.$$
Since $(\zeta-\overline{\zeta})_* \delta \in H_1(E_{3,2}(\C), \Q)^+$, where $+$ denotes the part fixed by $F_{\infty} \otimes c$, we have  $r_{\D}(\xi)((\zeta-\overline{\zeta})_* \delta)=0$. 
Therefore, we have 
$$R=\left| r_{\D}(\xi)(\delta) \cdot  r_{\D}((\zeta-\overline{\zeta})^* \xi)((\zeta-\overline{\zeta})_* \delta) \right|. $$
By using Mathematica, we have 
\begin{align*}
r_{\D}(\xi)(\delta)&= r_{\D}((\zeta-\overline{\zeta})^* \xi)((\zeta-\overline{\zeta})_* \delta) \\
& =-0.73225693138562217781835762443363615482360800039157 \ldots , 
\end{align*}
hence we obtain 
$$R=0.53620021356228778605831812308182307487055561499420 \ldots. $$

On the other hand, by using Magma, we have 
$$L^{(2)}(E_K, 0)=38.606415376484720596198904861891261390680004279582 \ldots. $$
Thus, we have numerically 
$$R \approx \frac{1}{72} \cdot L^{(2)}(E_K, 0)$$  
with a precision of $50$ decimal digits.


\begin{thebibliography}{99}
\bibitem{Asakura}
M. Asakura,  
\textit{Regulators of $K_2$ of hypergeometric fibrations}, 
Res. Number Theory \textbf{4} no. 2, Paper No. 22, 25 pp (2018). 



\bibitem{Beilinson}
A. A. Beilinson,  
\textit{Higher regulators and values of $L$-functions}, 
J. Soviet Math. \textbf{30}, 2036--2070 (1985). 



\bibitem{Be86}
A. A. Beilinson,  
\textit{Higher regulators of modular curves. Applications of algebraic $K$-theory to algebraic geometry and number theory}, 
Part I, II (Boulder, Colo., 1983), 1--34, Contemp. Math., \textbf{55}, Amer. Math. Soc., Providence, RI, 1986. 



\bibitem{Bloch}
S. Bloch, 
\textit{Lectures on algebraic cycles},
Duke University Mathematics Series, IV. Duke University, Mathematics Department, Durham, N.C.,182 pp (1980).
(Lectures on algebraic cycles. Second edition. New Mathematical Monographs, \textbf{16}. Cambridge University Press, Cambridge, 2010. xxiv+130 pp. )



\bibitem{BCGP}G.~Boxer, F.~Calegari, T.~Gee, and V.~Pilloni, 
\textit{Abelian surfaces over totally real fields are potentially modular},  
Publ. Math. Inst. Hautes \'Etudes Sci. \textbf{134} (2021), 153-501.



\bibitem{Bru07}F.~Brunault, 
\textit{Valeur en 2 de fonctions L de formes modulaires de poids 2: th\'eor\`eme de Beilinson explicite}, 
Bull. Soc. Math. France \textbf{135} (2007), no. 2, 215--246. 



\bibitem{DJZ}
T. Dokchitser, R. de Jeu and D. Zagier, 
\textit{Numerical verification of Beilinson's conjecture
for $K_2$ of hyperelliptic curves},
Compos. Math. \textbf{142} no. 2, 339--373  (2006). 



\bibitem{LJ15}H.~Liu and R.~de Jeu, 
\textit{On $K_2$ of certain families of curves}, 
Int. Math. Res. Not. IMRN 2015, no. \textbf{21}, 10929--10958.



\bibitem{KS}
 T.-H. Koornwinder, J.-V. Stokman (Eds.), 
 \textit{Encyclopedia of special functions the AskeyBetaman project}, 
 Vol. II : Multivariable special functions, Cambridge University Press, 2021.
 
 
 
\bibitem{Langlands}R.~Langlands, 
 \textit{Base change for GL(2)}.
Annals of Mathematics Studies, No. \textbf{96}. Princeton University Press, Princeton, NJ; University of Tokyo Press, Tokyo, 1980. vii+237 pp.
 
 
 
\bibitem{LMFDB}
The LMFDB Collaboration,  \textit{The $L$-functions and modular forms database} (\url{https://www.lmfdb.org/}).

 

\bibitem{Nekovar}
J. Nekov{\'a}\v{r},  
\textit{Beilinson's conjectures},
in Motives (Seattle, WA, 1991), Part 1, Proc. Sympos. Pure Math \textbf{55}, Amer. Math. Soc., Providence, RI (1994), 537$-$570. 



\bibitem{Nemoto}
Y. Nemoto, 
\textit{Regulator of the Hesse cubic curves and hypergeometric functions}, 
Manuscripta Math. \textbf{175} (2024), no. 3-4, 813--840.



\bibitem{Otsubo}
N. Otsubo, 
\textit{On the regulator of Fermat motives and generalized hypergeometric functions},
J. reine angew. Math. \textbf{660}, 27--82 (2011). 



\bibitem{Ram}
D. Ramakrishnan, 
\textit{Dinakar Regulators, algebraic cycles, and values of L-functions}, 
Algebraic $K$-theory and algebraic number theory (Honolulu, HI, 1987), 183--310, Contemp. Math., \textbf{83}, Amer. Math. Soc., Providence, RI, (1989). 



\bibitem{RSS}M. Rapoport, N. Schappacher and P. Schneider,
\textit{Beilinson's conjectures on special values of $L$-functions}, 
Perspectives in Mathematics, \textbf{4}. Academic Press, Inc., Boston, MA, 1988. xxiv+373 pp.



\bibitem{Ross1}
R. Ross, 
\textit{$K_2$  of Fermat curves and values of $L$-functions}
C. R. Acad. Sci. Paris S\'er. I Math. \textbf{312} (1991), no. 1, 1--5.  



\bibitem{Ross2}
R. Ross, 
\textit{$K_2$  of Fermat curves with divisorial support at infinity},
 Compositio Math. \textbf{91} no. 3, 223--240 (1994). 

 

\bibitem{Slater}
L. J. Slater,  
\textit{Generalized hypergeometric functions}, 
Cambridge University Press, Cambridge (1966).



\bibitem{Tokiwa}
 Y. Tokiwa, 
\textit{Some abelian coverings on $\mathbb{P}^1$ minus four points and its regulator}, 
master's thesis (Japanese), 2020, Tohoku University.  

\end{thebibliography}
\end{document}